\begin{document}
\newtheorem{thm}{Theorem}[section]
\newtheorem{lem}[thm]{Lemma}
\newtheorem{cor}[thm]{Corollary}
\theoremstyle{definition}
\newtheorem{defn}[thm]{Definition}
\newtheorem{conj}{Conjecture}
\newtheorem{quest}[conj]{Question}
\theoremstyle{remark}
\newtheorem*{remark}{Remarks}
\newcommand{\beq}{\begin{equation}}
\newcommand{\eeq}{\end{equation}}
\title{Denominators for One Variable Poincar\'e
Series of Generic Matrices}
\author{Allan Berele\\ Department of Mathematical
Sciences\\ DePaul University\\ Chicago, IL 60614\\
USA}
\abstract{We study the Poincar\'e series of the
 mixed and pure trace rings of generic matrices.  These series
are known to be rational functions.  We obtain an explicit 
formula in lowest terms in the case of $2\times2$ matrices; a denominator,
which we presume but have not been able
to prove to be in lowest terms,
in the case of $3\times3$ matrices; and a conjectured denominator
in the general case.}
\maketitle
\section{Introduction}
This paper is concerned with the Poincar\'e series
of two families of algebras, $\bar{C}=\bar{C}(n,k)$ and
$\bar{R}=\bar{R}(n,k)$, which in p.i. theory are called the
pure and mixed trace rings of generic matrices, and
in invariant theory are called the algebras of matrix
invariants and concomitants.

Let $X_\alpha$ be the $n\times n$ matrix with each $(i,j)$ entry equal to the commuting indeterminate $x_{ij}^{(\alpha)}$, and let
$F$ be a field of characteristic zero.  The algebra $F[X_1,\ldots,X_k]$, which 
is contained in the algebra of $n\times n$ matrices over the 
polynomial ring $F[x_{ij}^{\alpha)}]_{i,j,\alpha}$, is called the algebra of
generic matrices and it satisfies various universal properties which we do not 
discuss here.  There are three additional algebras that may be constructed from
$R$.  One is $C$, the center of $R$.  The other two are constructed using the
trace map $tr:R\rightarrow F[x_{ij}^{\alpha)}]_{i,j,\alpha}$.  The algebra $\bar{C}
=\bar{C}(n,k)$ is called the pure trace ring and
is defined to be the commutative algebra generated by the image of $tr$; and
$\bar{R}=\bar{R}(n,k)$ is called the mixed
trace ring, and is defined to be the algebra generated by $R$ and $\bar{C}$.

The two algebras $\bar{C}$ and $\bar{R}$, which will be the main focus of 
this work, can also be defined using invariant theory.  A function $f:M_n(F)^k
\rightarrow F$ is said to be invariant if 
$$f(x_1,\dots,x_k)=f(gx_1g^{-1},\ldots,gx_kg^{-1})$$
for all $x_1,\ldots,x_k\in M_n(F)$ and all $g\in GL_n(F)$; and it is said to
be polynomial in the entries if each entry of $f(x_1,\ldots,x_k)$ is a polynomial
function of the entries of $x_1,\ldots,x_k$.  An important example would be
$tr(u)$ where $u$ is any polynomial in $x_1,\ldots,x_k$.  The First Fundamental
Theorem of invariant theory says that the algebra of invariant polynomial functions
on $M_n(F)$ is generated by these trace functions, $tr(u)$.  Using this it can be proven 
that this algebra is isomorphic to $\bar{C}$, defined above.  A similar
construction can be used to show that if one considers
 functions from
$M_n(F)^k$ to $M_n(F)$, instead of to $F$, invariant and polynomial in the
entries, that the algebra of such functions is isomorphic to $\bar{R}$.  Obviously, this is a bare-bones description of a deep subject.  If the reader
wishes to learn more there are many resources including~\cite{AGPR}
and~\cite{F}.

Each of the four rings has an $\mathbb{N}$ grading by total degree and a
finer $\mathbb{N}^k$ grading by degree in the individual matrices.  Correspondingly, there are eight Poincar\'e series one might study:
$P(R),$ $P(C),$ $P(\bar{R})$, $P(\bar{C})$, $\tilde{P}(R),$ $\tilde{P}(C),$
 $\tilde{P}(\bar{R})$, and $\tilde{P}(\bar{C})$, where $P$ is used for the
 series in one variable and $\tilde{P}$ is used for the $k$~variable series.
Then,
$$\tilde{P}(X)(t,t,\ldots,t)=P(X)$$
for $X$ any one of $C$, $R$, $\bar{C}$ or $\bar{R}$.

 These series have been the focus of a lot of research.  One important
 theorem is that each is a rational function whose denominator can be
 taken to be a product of terms of the form $(1-t^u)$, where $t^u$ is a monomial
of degree at most~$n$. In \cite{F} Formanek 
described how these series could be computed using
complex integration, but it was Teranishi who
first made use of this suggestion in~\cite{T}, \cite{T87}
and~\cite{T88}.  The Poincar\'e series
for $\bar{C}(n,k)$ equals
\beq(2\pi i)^{-n}(n!)^{-1}\oint_T
\frac{\prod_{i\ne j=1}^n(1-\frac{z_i}{z_j})}
{\prod_{\alpha=1}^k\prod_{i,j=1}^n(1-\frac{z_i}{z_j}
t_\alpha)}
\frac{dz_1}{z_1}\wedge\cdots\wedge\frac{dz_n}{z_n}\eeq
where $T$ represents the torus $|z_1|=\cdots=|z_n|=1$.
The Poincar\'e series for $\bar{R}(n,k)$ is the same with
an extra factor of $\sum\frac{z_i}{z_j}=\sum z_i\sum
z_j^{-1}$ in the numerator:
\beq(2\pi i)^{-n}(n!)^{-1}\oint_T
\frac{\sum_{i,j=1}^n z_iz_j^{-1}\prod_{i\ne j=1}^n(1-\frac{z_i}{z_j})}
{\prod_{\alpha=1}^k\prod_{i,j=1}^n(1-\frac{z_i}{z_j}
t_\alpha)}
\frac{dz_1}{z_1}\wedge\cdots\wedge\frac{dz_n}{z_n}\eeq
These integrals were used to study the Poincar\'e
series of $\bar{C}(n,k)$ and $\bar{R}(n,k)$ in a 
number of papers including \cite{B05}, \cite{BS},
\cite{D}, \cite{T}, \cite{T87}, \cite{T88} and \cite{V}.

By setting all of the $t_\alpha$ to a single variable
$t$ in~(1) and~(2) we get the the one variable Poincar\'e series for
$\bar{C}(n,k)$ and $\bar{R}(n,k)$.  The former would
be
\beq(2\pi i)^{-n}(n!)^{-1}\oint_T
\frac{\prod_{i\ne j=1}^n(1-\frac{z_i}{z_j})}
{\prod_{i,j=1}^n(1-\frac{z_i}{z_j}
t)^k}
\frac{dz_1}{z_1}\wedge\cdots\wedge\frac{dz_n}{z_n}\label{eq:3a}\eeq
and the latter
\beq(2\pi i)^{-n}(n!)^{-1}\oint_T
\frac{\sum_{i,j=1}^n z_iz_j^{-1}\prod_{i\ne j=1}^n(1-\frac{z_i}{z_j})}
{\prod_{i,j=1}^n(1-\frac{z_i}{z_j}
t)^k}
\frac{dz_1}{z_1}\wedge\cdots\wedge\frac{dz_n}{z_n}\eeq

 In the case of $n=2$, namely
$2\times2$ matrices, the series have been computed in various ways, see \cite{F}
and \cite{L}.   An explicit formula for the one variable Poincar\'e series $P(
\bar{C}(2,k))$
was proven by Teranishi in \cite{T}:
$$P(\bar{C}(2,k))=(-1)^{k-1}\frac1{2(k-1)!(1-t)^{2k}}\left(\frac d{dz}
\right)^{k-1}\left.\frac{z^{k-2}(z-1)^2}{(tz-1)^k}\right|_{z=t}$$
In the next two sections of this paper we will prove a different formula
for $P(\bar{C}(2,k))$ and also one for $P(\bar{R}(2,k))$ expressing
each as a rational function in lowest terms.
  Theorem~\ref{thm:3.1} states
that $\bar{C}(2,k)$ has Poincar\'e series
$$(1-t)^{2-2k}(1-t^2)^{1-2k}
\left(\sum{k-2\choose i}^2t^{2i}-\sum{k-2\choose i}{k-2
\choose i+1}t^{2i+1}\right)$$
and Theorem~\ref{thm:3.2} states that
 the Poincar\'e series of
$\bar{R}(2,k)$ equals
$$(1-t)^{-2k}(1-t^2)^{3-2k}\sum\frac1{k-2}
{k-2\choose i}{k-2\choose i+1}x^{2i}$$
  The coefficients in the numerator of $P(\bar{R}(2,k))$ 
 turn out to be the Narayama
numbers.  For more on these numbers, see~\cite{P}.

 In \cite{BS} Berele and Stembridge found denominators for
$\bar{P}(\bar{C}(n,k))$ and $\bar{P}(\bar{R}(n,k))$ for $n\le 4$ and showed that
they were least denominators for $n\le 3$.  In the current
paper we will be studying the denominators for the
one variable Poincar\'e series. Of course, one could find 
denominators for the one variable Poincar\'e series
by simply specializing that of the $k$ variable series, however
such a denominator would be far from least.  For example,
$\bar{P}(\bar{C}(2,k)$ has denominator 
$\prod_i(1-t_i)\prod_{i\le j}(1-t_it_j)$ which
specializes to $(1-t)^k(1-t^2)^{k+1\choose 2}$, but
the least denominator for $\bar{P}(\bar{C}(2,k)$
 is $(1-t)^{2k-2}(1-t^2)^{2k-1}$.
  Based on the 
$k$ variable case we know that the denominators for the
one variable Poincar\'e series of $\bar{C}(n,k)$ and $\bar{R}(n,k)$
are products of terms $(1-t^i)$ with $i\le n$. For $n=3$ we
find denominators which we believe to be least, although
we do not prove it. In the case of $P(\bar{C}(3,k))$ the denominator
is $(1-t)^{2k-2}(1-t^2)^{4k-4}(1-t^3)^{3k-2}$, and in the case of
$P(\bar{R}(3,k))$ the denominator is $(1-t)^{2k}(1-t^2)^{4k-4}(1-t^3)^{3k-4}$.
  Zeilberger has computed $P(\bar{C}(3,k))$ 
for $k\le 100$, see~\cite{Z}, and in each of these cases our denominators
are least. At any rate, the degrees are again much
smaller than those gotten by specializing the $k$ variable
series. Finally, in Section~5 we present a conjecture for the
denominators of all ${P}(\bar{C}(n,k))$ and ${P}(\bar{R}(n,k))$
which agrees with our results for $n=2,3$ and with
the denominators computed by Dokovi\'c in \cite{D} for
$n=4,5,6$ and $k=2$.  Since these are all the known denominators at this point we feel confident that our conjecture is correct.

 Procesi proved
that $\bar{C}(n,k)$ and $\bar{R}(n,k)$ each have Gelfand-Kirillov dimension equal to
$(k-1)n^2+1$ implying this lemma.
\begin{lem} The Poincar\'e series for each $\bar{C}(n,k)$ and
$\bar{R}(n,k)$ has a pole at $t=1$ of order $(k-1)n^2+1$.\label{lem:1.1}
\end{lem}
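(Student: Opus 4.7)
The plan is to deduce the lemma from the standard correspondence between Gelfand-Kirillov dimension and the order of the pole at $t=1$ of the Hilbert series of a finitely generated graded algebra or module. Since Procesi has already established $\operatorname{GKdim}\bar C(n,k)=\operatorname{GKdim}\bar R(n,k)=(k-1)n^2+1$, the task reduces to translating this dimension statement into the claimed pole order.

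First I would handle $\bar C=\bar C(n,k)$. It is a finitely generated $\mathbb N$-graded commutative $F$-algebra, generated in degrees at most $n$, so by the Hilbert-Serre theorem its Poincar\'e series is a rational function of the form $f(t)/\prod_i(1-t^{d_i})$ with $1\le d_i\le n$. The order of the pole of this rational function at $t=1$ equals the Krull dimension of $\bar C$, and for finitely generated commutative $F$-algebras the Krull dimension coincides with the Gelfand-Kirillov dimension. Plugging in Procesi's value gives the pole order $(k-1)n^2+1$ immediately.

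For $\bar R=\bar R(n,k)$ the extra ingredient is that $\bar R$ is a finitely generated graded module over $\bar C$; this is part of the classical Procesi-Razmyslov structure theory of trace rings and ultimately is a reformulation of the Cayley-Hamilton theorem (each generic matrix satisfies a monic polynomial of degree $n$ with coefficients in $\bar C$). For a finitely generated graded module over a finitely generated graded commutative $F$-algebra, the Hilbert-Serre theorem still applies and the pole order at $t=1$ is the Krull dimension of the module, which equals its GK dimension. Because $\bar R$ is finite over $\bar C$ the two GK dimensions agree, so Procesi's theorem supplies the value $(k-1)n^2+1$ once more.

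The only points requiring any real care are the module version of Hilbert-Serre and the coincidence of GK dimension with Krull dimension for $\bar R$; both are standard, so I would cite them rather than reprove them. There is no genuinely hard step here, which is presumably why the author presents the lemma as a direct consequence of Procesi's GK-dimension computation.
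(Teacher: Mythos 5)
Your proposal is correct and takes essentially the same route as the paper: the paper simply invokes Procesi's computation that $\bar{C}(n,k)$ and $\bar{R}(n,k)$ have Gelfand--Kirillov dimension $(k-1)n^2+1$ and presents the lemma as an immediate consequence, exactly the translation from GK dimension to pole order at $t=1$ (via Hilbert--Serre and the finiteness of $\bar R$ over $\bar C$) that you spell out.
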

This lemma will be useful in our computation of the denominators partly
because if we can compute the multiplicity of each $(1-t^i)$ in
the denominator for $i\ge2$, then we can deduce the multiplicity of $(1-t)$.

Here is another useful
theorem due to Formanek in \cite{F86}, originally stated for the multiple
variable Poincar\'e series.
\begin{lem} If $F(t)$ equals either $P(\bar{C}(n,k))$ or
$P(\bar{R}(n,k))$, then $F(t)$ satisfies the functional equation
 $F(\frac1t)=\pm t^{kn^2}F(t)$.  In particular, if the 
denominator has degree $d$ then the numerator will have
degree $d-kn^2$.\label{lem:1.2}
\end{lem}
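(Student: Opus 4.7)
My plan is to work directly from the integral representations (3) and (4), substituting $t \mapsto 1/t$ and manipulating the integrand until we recover the original integral times $\pm t^{kn^2}$. The key algebraic identity is
$$1 - \frac{z_i}{z_j t} = -\frac{z_i}{z_j t}\left(1 - \frac{z_j t}{z_i}\right),$$
so taking the $k$-th power and the product over all ordered pairs $(i,j)$ with $i,j \in \{1,\ldots,n\}$ yields
$$\prod_{i,j=1}^n \left(1 - \frac{z_i}{z_j t}\right)^k = (-1)^{kn^2}\,t^{-kn^2}\left(\prod_{i,j} \frac{z_i}{z_j}\right)^k \prod_{i,j=1}^n \left(1 - \frac{z_j t}{z_i}\right)^k.$$
The crucial bookkeeping step is the observation that $\prod_{i,j} z_i/z_j = \prod_i z_i^n \cdot \prod_j z_j^{-n} = 1$, so the $z$-monomial factor disappears cleanly.

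Next I would relabel $i \leftrightarrow j$ in the resulting product to turn $\prod_{i,j}(1 - z_j t/z_i)^k$ back into $\prod_{i,j}(1 - z_i t/z_j)^k$. This is legal because the remaining ingredients of the integrand and the measure are symmetric under this swap: the numerator $\prod_{i \ne j}(1 - z_i/z_j)$ is a product over all ordered pairs with $i \ne j$ and therefore invariant, the $\bar{R}$ extra numerator $\sum_{i,j} z_i z_j^{-1} = (\sum_i z_i)(\sum_j z_j^{-1})$ is manifestly symmetric in its two sums, and the differential form $\bigwedge_i dz_i/z_i$ is unchanged. Pulling out the resulting constant $(-1)^{kn^2} t^{kn^2}$, the integral that remains is exactly the one defining $F(t)$, yielding
$$F(1/t) = (-1)^{kn^2}\,t^{kn^2}\,F(t),$$
which is the claimed functional equation with sign $(-1)^{kn^2}$ in both the $\bar{C}$ and $\bar{R}$ cases.

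For the ``in particular'' statement, I would argue by comparing orders at $t=0$. Writing $F(t) = N(t)/D(t)$ with $N,D$ polynomials, $F(1/t)$ behaves like $t^{\deg D - \deg N}$ times a nonzero constant as $t \to 0$, while $\pm t^{kn^2} F(t)$ behaves like $t^{kn^2} F(0)$, and $F(0) = 1$ since the constant term of each Poincar\'e series is $1$. Equating the orders of vanishing at $t=0$ forces $\deg N = \deg D - kn^2$.

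The main obstacle I anticipate is not conceptual but bookkeeping: keeping track of the sign contributions from the $n^2$ factors and confirming that the monomial $\prod z_i/z_j$ really is identically $1$ (rather than, say, a power of $\prod z_i$ that only vanishes on the torus). Once that identity is in hand and the relabeling symmetry of the numerator is verified in each of the two cases, the rest is essentially a one-line computation. A sanity check would be to compare the sign predicted by $(-1)^{kn^2}$ against Formanek's original multi-variable version in \cite{F86}, which is specializable to the single-variable series by setting all $t_\alpha = t$.
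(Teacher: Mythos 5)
The paper does not actually prove this lemma; it merely cites Formanek~\cite{F86}, so there is no internal argument to compare against. Your algebraic manipulation of the integrand is correct: the identity $1-\tfrac{z_i}{z_j t}=-\tfrac{z_i}{z_j t}(1-\tfrac{z_j t}{z_i})$, the cancellation $\prod_{i,j}z_i/z_j=1$, and the relabeling symmetry of the numerator and measure all check out. However, what you have shown is an identity between two \emph{contour integrals}, and that is not the same as the functional equation for the rational function $F$.

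The gap is analytic continuation. The integral~(\ref{eq:3a}) over the torus $|z_i|=1$ only represents the Poincar\'e series when $|t|<1$; that is the regime in which the geometric-series expansion of each $(1-(z_i/z_j)t)^{-1}$ converges on the contour and picks out the trivial-isotype multiplicity. When you substitute $t\mapsto 1/t$ with $|t|<1$, you evaluate the integral at an argument of modulus greater than~$1$. At that point the poles $z_i=tz_j$ and $z_i=z_j/t$ have swapped sides of the torus, the residue sum changes, and the contour integral for $|t|>1$ is a \emph{different} rational function from $F$. So your identity says $I(1/t)=(-1)^{kn^2}t^{kn^2}I(t)$ where $I$ denotes the integral, but it does not give $F(1/t)=(-1)^{kn^2}t^{kn^2}F(t)$ without the additional (and far from obvious) fact that the integral for $|t|>1$ still equals~$F$.

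That this additional fact is genuinely needed can be seen from the case $k=1$, $n=2$, where $\bar{C}(2,1)=F[\mathrm{tr}\,X,\ \mathrm{tr}\,X^2]$ has Poincar\'e series $F(t)=\dfrac{1}{(1-t)(1-t^2)}$. Here one checks directly that $F(1/t)=t^3F(t)$, \emph{not} $\pm t^{4}F(t)$ as your argument (which is insensitive to the value of $k$) would conclude. Concretely, for $n=2$, $k=1$ the contour integral evaluated at $|t|>1$ turns out to be $\tfrac1t F(t)$ rather than $F(t)$, and the discrepancy exactly accounts for the missing power of~$t$. Formanek's theorem is in fact only valid for $k\ge2$, which is where the underlying Gorenstein structure of the trace ring enters, and that is precisely the ingredient your integral manipulation never invokes. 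To repair the proof you would have to show that the contour integral defines the same rational function on both sides of $|t|=1$ for $k\ge2$, which amounts to showing the residues of the poles crossing the torus cancel, and that is not a bookkeeping step. Your reduction of the ``in particular'' clause to comparing orders of vanishing at $t=0$ is fine once the functional equation is in hand.
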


The current work was inspired by a much larger question.  Given a p.i. algebra
$A$, perhaps with~1, there are various integer sequences $c_m(A)$ related to the cocharacter
of $A$ which are known to be asymptotic to some $\alpha m^d$.  In this
paper we are focusing on the case in which $c_m(A)$ equals the growth function for a generic algebra for
$A$.  Other sequences known to be asymptotic to a polynomial include
the colength sequence and the maximal multiplicity sequence, see~\cite{B06}
and~\cite{B08}.  In the case of verbally prime algebras and prime product
algebras the exponent~$d$ has been computed
for the growth functions of the generic algebras
and for the colegnth and maximal multiplicity
sequences, see~\cite{B93}, \cite{B05}, \cite{B08a},
\cite{B10} and~\cite{B10a}, but the coefficients $\alpha$ can only be computed
in those rare cases in which the codimension sequence is completely known.  We feel that these coefficients should be of interest.

Generally, if $f(t)$ and $g(t)$ are polynomials not divisible by $(1-t)$ and if
all of the zeros of $g(t)$ are on the unit circle, each with multiplicity less
than~$d$, then if
$$\frac{f(t)}{(1-t)^dg(t)}=\sum c_mt^m$$
the coefficient $c_m$ will be asymptotic to
$\frac{f(1)}{g(1)}{m+d-1\choose d-1}$ or
 $\frac{f(1)}{g(1)(d-1)!}m^{d-1}$.
It follows that if $P(\bar{C}(k,n))=\sum \bar{c}_mt^m$ and $P(\bar{r}(n,k))
=\sum \bar{r}_mt^m$ then each of $\bar{c}_m$ and $\bar{r}_m$ is
asymptotic to some $\alpha m^{(k-1)n^2}$ where $\alpha$ is a rational
number with denominator of the form $\alpha'[(k-1)n^2]!$, where
$\alpha'$ has all prime factors less than or equal to~$n$.  If our conjecture
is true, one could identify the denominator more precisely.  Likewise, if
$\gamma_m=\sum_{i=0}^m \bar{c}_i$ and $\rho_m=\sum_{i=0}^m
\bar{r}_i$, then $\sum \gamma_mt^m=(1-t)^{-1}\sum \bar{c}_mt^m$
and  $\sum \rho_mt^m=(1-t)^{-1}\sum \bar{r}_mt^m$, and so each of
$\gamma_m$ and $\rho_m$ is asymptotic to a constant times 
$m^{(k-1)n^2+1}$ where the constant factor is the same rational number
as before, divided by $(k-1)n^2+1$.

In the case of $P(\bar{C}(2,k))=N_k(t)(1-t)^{2-2k}(1-t^2)^{1-2k}$
Teranishi computed $N_k(1)=\frac1{k-1}{2k-4\choose k-2}$. These
are the Catalan numbers $C_{k-2}$, and
so $\bar{c}_m\simeq C_{k-2}2^{1-2k}{m+4k-2\choose 4k-2}.$    It follows from our work that if
$P(\bar{R}(2,k))=N_k(t)(1-t)^{2k}(1-t^2)^{3-2k}$ then
$N_k(1)$ also equals the Catalan number $C_{k-2}$ and so
$\bar{c}_m\simeq C_{k-2}2^{3-2k}{m+4k-2\choose 4k-2}$.

In the case of $P(\bar{C}(3,k))=N_k(t)(1-t)^{2-2k}(1-t^2)^{4-4k}(1-t^3)^{2-3k}$, so $\bar{c}_m$ is asymptotic to $N_k(1)2^{4-4k}3^{2-3k}
{m+9k-8\choose 9k-7}$.
It is possible to compute $N_k(1)$ for low values
of $k$ using Zeilberger's results.  Sadly, no pattern is apparent.
The first few are $N_2(1)=1,$ $N_3(1)=21$ and $N_4(1)=1636$.

We are happy to acknowledge B. Tenner's help in using oeis; useful conversations
with S. Catoiu and K. Liechty; and D. Zeilberger for patiently explaining how to
use his algorithm in Maple and, as already mentioned, computing the Poincar\'e
series $P(\bar{C}(2,k))$ and $P(\bar{C}(3,k))$ for $k\le 100$ in~\cite{Z}.  These computations involved  new recurrence
relations he found for these functions.  The relation
for $n=2$ is fairly simple.  If $B(k)=(1-t)^{2k}
P(\bar{C}(2,k)$ then $B(k)$ equals $(1-t^2)^{-2}
(k-1)^{-1}$ times
$$2(kt^2-2t^2+k-t-2)B(k-1)-(k-3)B(k-2).$$
On the other hand, the relation for $n=3$
is anything but simple and takes up hundreds
of lines of output!

\section{Two-By-Two Matrices - Denominators}
In \cite{T}, a work which served as inspiration for much
of this paper, Teranishi computed the asymptotics of the coefficients in Poincar\'e series in
the case of $2\times2$ matrices.  We now 
adapt his methods to give a more explicit
result.

The Poincar\'e series for the trace ring $\bar{C}(2,k)$ of
$2\times 2$  generic matrices for $k\ge2$ is given by the integral
\begin{equation}\label{eq:1a}
\frac1{2!}(2\pi i)^{-2}\oint_{|z_2|=1}\oint_{|z_1|=1}
\frac{(1-\frac{z_1}{z_2}t)(1-\frac{z_2}{z_1})}{(1-t)^{2k}
(1-t\frac{z_1}{z_2})^k(1-t\frac{z_2}{z_1})^k}\frac{dz_1}{z_1}
\wedge\frac{dz_2}{z_2}\end{equation}
 Pulling out the factor of $(1-t)^{2k}$ and clearing fractions
we get  $-\frac12(2\pi i)^{-2}(1-t)^{-2k}$ times
$$\oint_{|z_2|=1}\oint_{|z_1|=1}\frac{-z_1^{k-2}z_2^{k-2}
(z_1-z_2)^2}{(z_1-z_2t)^k(z_2-z_1t)^k} dz_1\wedge dz_2.$$
Since $t$ is taken to be less than~1 the only pole in the unit
circle is at $z_1=tz_2$ and it is of order~$k$.  By Cauchy's
integration formula we can now evaluate $(2\pi i)^{-1}$ times
 the inner integral
in three steps:  (1)~multiply by $(z_1-tz_2)^k/(k-1)!$  This step will have the effect
of cancelling the $(z_1-tz_2)^k$ term from the denominator.
(2)~Take the $(k-1)^{st}$ derivative with respect to~$z_1$ and
(3)~take the limit as $z_1\rightarrow tz_2$.  Letting $D$
denote the derivative with respect to $z_1$, the multiplication by $(z_1-tz_2)^{k-1}$
derivative $D^{k-1}$ transforms the integrand to 
\begin{equation}
(k-1)!^{-1}\sum_{a+b+c=k-1}{k-1\choose a,b,c}-D^{a}(z_1^{k-2})
z_2^{k-2}D^b(z_1-z_2)^2D^c(z_2-z_1t)^{-k}.
\label{eq:1}\end{equation}
At this point it is useful to use the notation of rising and
falling factorials.  $(n)_a$ is defined to be $n(n-1)\cdots
(n-a+1)$ or $n!/(n-a)!$; and $n^{(a)}$ is defined to be
$n(n+1)\cdots (n+a-1)$ or $(n+a-1)!/(n-1)!$.  Taking into account that the $(k-1)!$ coefficient
will cancel the $(k-1)!$ from the binomial coefficient, the
summand in (\ref{eq:1}) corresponding to a given $a,b,c$
equals
$$-\frac1{a!b!c!}(k-2)_a z_1^{k-2-a}z_2^{k-2}
(2)_b(z_1-z_2)^{2-b} k^{(c)}t^c(z_2-tz_1)^{-k-c}.$$
Substituting $z_1=tz_2$ then gives
$$-\frac1{a!b!c!} (k-2)_a(2)_bk^{(c)}t^{k-2-a}
z_2^{k-2-a}z_2^{k-2}(t-1)^{2-b}t^cz_2^{-k-c}
(1-t^2)^{-k-c}$$
Collecting the powers of $z_2$ gives $z_2$ to the
power of 
$$k-2-a+k-2+2-b-k-c=k-2-(a+b+c)=k-2-(k-1)=-1.$$
The integral with respect to $z_2$ will merely 
cancel that term and the remaining $(2\pi i)^{-1}$
in front, giving this theorem:
 \begin{thm} The Poincar\'e series of $\bar{C}$ for $k$
$2\times 2$ matrices equals the sum over all $a+b+c=
k-1$, $b\le2$ of $(1-t)^{-2k}$ times
$$-\frac1{2\cdot a!b!c!} (k-2)_a(2)_bk^{(c)}t^{k-2-a+c}
(t-1)^{2-b}
(1-t^2)^{-k-c}$$\label{th:2.1}
\end{thm}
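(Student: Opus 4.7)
The plan is to evaluate the double contour integral (\ref{eq:1a}) iteratively via Cauchy's residue theorem. First I would perform the algebraic cleanup described in the setup: clear fractions using $1-z_i/z_j=(z_j-z_i)/z_j$ and $1-(z_i/z_j)t=(z_j-z_i t)/z_j$, absorb the $dz_i/z_i$ factors into the $z_i$ powers, and pull out the constant factor $(1-t)^{-2k}$ coming from the diagonal $i=j$ of the denominator. This puts the integrand into the form $\frac{-z_1^{k-2}z_2^{k-2}(z_1-z_2)^2}{(z_1-z_2 t)^k(z_2-z_1 t)^k}\,dz_1\wedge dz_2$ with the stated prefactor $-\tfrac12(2\pi i)^{-2}(1-t)^{-2k}$ out front.

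For fixed $z_2$ on the unit circle and $0<t<1$, the inner integrand has poles at $z_1=tz_2$ (inside $|z_1|=1$) and at $z_1=z_2/t$ (outside); only the former contributes. The Cauchy integral formula for a pole of order $k$ reduces the inner integral to $\frac{2\pi i}{(k-1)!}\,D^{k-1}\!\left[\frac{-z_1^{k-2}z_2^{k-2}(z_1-z_2)^2}{(z_2-z_1 t)^k}\right]_{z_1=tz_2}$, where $D=\partial/\partial z_1$. I would then apply the Leibniz rule to the three-factor product $z_1^{k-2}\cdot(z_1-z_2)^2\cdot(z_2-z_1 t)^{-k}$ (with $z_2^{k-2}$ a $z_1$-constant), producing a triple sum indexed by $(a,b,c)$ with $a+b+c=k-1$. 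Each derivative has a closed form in the rising/falling factorial notation: $D^a(z_1^{k-2})=(k-2)_a z_1^{k-2-a}$; $D^b((z_1-z_2)^2)=(2)_b(z_1-z_2)^{2-b}$, which vanishes for $b>2$ and explains the truncation in the theorem; and a short induction yields $D^c((z_2-z_1 t)^{-k})=k^{(c)}t^c(z_2-z_1 t)^{-k-c}$, the two potential sign flips cancelling. The multinomial coefficient $\binom{k-1}{a,b,c}$ from Leibniz cancels the $1/(k-1)!$ to leave $1/(a!\,b!\,c!)$.

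The final step is to substitute $z_1=tz_2$ and carry out the outer integral. After substitution, $z_1^{k-2-a}\to t^{k-2-a}z_2^{k-2-a}$, $(z_1-z_2)^{2-b}\to(t-1)^{2-b}z_2^{2-b}$, and $(z_2-z_1 t)^{-k-c}\to(1-t^2)^{-k-c}z_2^{-k-c}$. Summing the exponents of $z_2$ gives $(k-2)+(k-2-a)+(2-b)+(-k-c)=k-2-(a+b+c)=-1$, so the $z_2$-integrand is proportional to $z_2^{-1}\,dz_2$ and the outer contour integral just contributes $2\pi i$, cancelling the remaining $(2\pi i)^{-1}$. Gathering what remains yields the sum claimed in Theorem~\ref{th:2.1}. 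The main obstacle is not any isolated computation but keeping the signs straight: the $-1$ from clearing fractions, the paired sign cancellation in $D^c((z_2-z_1 t)^{-k})$, and the combination of the $-\tfrac12$ and $(2\pi i)^{\pm1}$ prefactors must all be tracked consistently to match the stated sign and normalization.
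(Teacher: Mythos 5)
Your proposal is correct and follows essentially the same route as the paper: clear fractions and pull out $(1-t)^{-2k}$, evaluate the inner integral via the residue at $z_1=tz_2$ using the Cauchy formula for an order-$k$ pole, expand by Leibniz into a triple sum with the three derivatives expressed in rising/falling factorial form, substitute $z_1=tz_2$, observe the net power of $z_2$ is $-1$, and let the outer integral cancel the remaining $(2\pi i)^{-1}$. (Your accounting of the $z_2^{2-b}$ factor after substitution is in fact slightly more careful than the paper's displayed intermediate expression, which silently drops it before the final exponent tally.)
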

In order to identify the least denominator, note that the
order of the pole at $t=1$ in each term is $2k-2+b+k+c$.
This is maximal when $b+c=k-1$ and so is $4k-3$, in
agreement with Lemma~\ref{lem:1.1}.  And
the order of the pole at $t=-1$ in each term is $k+c$ with unique maximum
at $2k-1$.
\begin{cor} The Poincar\'e series of $\bar{C}$ for $k$
$2\times 2$ matrices is a rational function with 
denominator $(1-t)^{2k-2}(1-t^2)^{2k-1}$
\end{cor}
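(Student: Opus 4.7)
The strategy is to read off the denominator directly from the explicit sum in Theorem~\ref{th:2.1}, by bounding, term by term, the orders of the poles at $t=1$ and $t=-1$. Since $(1-t)^{2k-2}(1-t^2)^{2k-1}$ factors as $(1-t)^{4k-3}(1+t)^{2k-1}$, it suffices to show that for each admissible $(a,b,c)$ with $a+b+c=k-1$ and $b\le 2$, the corresponding summand in Theorem~\ref{th:2.1} has a pole of order at most $4k-3$ at $t=1$ and at most $2k-1$ at $t=-1$.

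First I would tabulate the factors in the generic summand carrying poles at $t=\pm1$. The denominator $(1-t)^{-2k}$ contributes order $2k$ at $t=1$, and $(1-t^2)^{-k-c}$ contributes order $k+c$ at $t=1$ and order $k+c$ at $t=-1$. The only offsetting factor from the numerator is $(t-1)^{2-b}$, which kills $2-b$ of the poles at $t=1$. So the total pole order at $t=1$ of the summand indexed by $(a,b,c)$ is
\[
2k + (k+c) - (2-b) \;=\; 3k-2+b+c \;=\; 4k-3-a,
\]
where the last equality uses $a+b+c=k-1$. In particular this is maximized (over all admissible triples) exactly when $a=0$, giving $4k-3$, matching the constraint from Lemma~\ref{lem:1.1}. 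Similarly, the pole order at $t=-1$ is just $k+c$, and is bounded by $k+(k-1)=2k-1$, the maximum being attained when $c=k-1$, i.e.\ $a=b=0$.

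Thus every term in the sum of Theorem~\ref{th:2.1} may be written over the common denominator $(1-t)^{4k-3}(1+t)^{2k-1}$, so the entire sum does as well. Since $(1-t)^{4k-3}(1+t)^{2k-1}=(1-t)^{2k-2}(1-t^2)^{2k-1}$, the corollary follows immediately.

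There is no real obstacle here: the argument is just a bookkeeping check on the exponents in each summand, with the numerator factor $(t-1)^{2-b}$ providing the crucial cancellation that drops the pole order at $t=1$ from $3k+c$ to $4k-3-a$. The only subtlety worth flagging is that no individual term exhibits \emph{both} the maximal pole at $t=1$ and the maximal pole at $t=-1$ simultaneously (the former requires $a=0$ with $b+c=k-1$, the latter forces $b=0$, $c=k-1$, so only the triple $(0,0,k-1)$ achieves both), but this observation is not needed for the corollary as stated; it would only be relevant if one wished to argue that $(1-t)^{2k-2}(1-t^2)^{2k-1}$ is in fact the least denominator, which the corollary does not claim.
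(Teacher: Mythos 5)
Your proposal is correct and follows essentially the same approach as the paper: the paper also reads off, term by term from Theorem~\ref{th:2.1}, that the pole order at $t=1$ is $2k-2+b+k+c$ (your $3k-2+b+c$, the same quantity) with maximum $4k-3$, and that the pole order at $t=-1$ is $k+c$ with maximum $2k-1$. Your bookkeeping matches the paper's exactly, and the extra remark about no single term achieving both maxima simultaneously is a correct observation that, as you note, is not needed here.
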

We record the first few numerators, which we denote $N_k$:
\begin{align*}
N_2&=1\\
N_3&=t^2-t+1\\
N_4&=t^4-2t^3+4t^2-2t+1\\
N_5&=t^6-3t^5+9t^4-9t^3+9t^2-3t+1\\
N_6  &=t^8-4t^7+16t^6-24t^5+36t^4-24t^3+15t^2-4t+1\\
N_7&=t^{10}-5t^9+25t^8-50t^7+100t^6-100t^5
+100t^4-50t^3+25t^2-5t+1\\
N_8&=t^{12}-6t^{11}+36t^{10}-90t^9+225t^8
-300t^7+400t^6-300t^5+225t^4-90t^3\\ &\quad+36t^2
-6t+1
\end{align*}

With some help from oeis we found that the coefficients in $N_k$ alternate between ${{n-2}\choose m}^2$
and $-{{n-2}\choose m}{{n-2}\choose m+1}$.  To see
what this means consider, for example, the fourth row of
Pascal's triangle:
$$1,\quad 4,\quad6,\quad4,\quad 1$$
Then the coefficients in $N_6$, up to sign, equal
$$1\cdot1,\quad1\cdot4,\quad4\cdot4,\quad4\cdot6
,\quad6\cdot6,\quad6\cdot4,\quad4\cdot4,\quad4\cdot
1,\quad1\cdot1$$
Before proving this in the next section, we turn
to the case of $\bar{R}$.

In order to compute the Poincar\'e series of $\bar{R}$
the computation is almost identical, except that the integrand
has an extra factor of $(2+\frac{z_1}{z_2}+\frac{z_2}{z_1})$
 in the numerator. Since the denominator will follow from 
the computations in the next section we leave the proof of
the following to the interested reader.
\begin{thm} The Poincar\'e series of $\bar{R}$ for 
$k$ $2\times 2$ matrices is a rational function with
 denominator $(1-t)^{2k}(1-t^2)^{2k-3}$.
\label{cor:2.3}\end{thm}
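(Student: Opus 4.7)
The plan is to mirror the residue calculation of Theorem~\ref{th:2.1}, inserting into the integrand the extra numerator factor $2+\frac{z_1}{z_2}+\frac{z_2}{z_1}=\frac{(z_1+z_2)^2}{z_1z_2}$ that distinguishes the $\bar{R}$ integral from~(\ref{eq:1a}). After clearing fractions and pulling out $(1-t)^{-2k}$, this replaces the polynomial $-z_1^{k-2}z_2^{k-2}(z_1-z_2)^2$ by $-z_1^{k-3}z_2^{k-3}(z_1^2-z_2^2)^2$ and leaves the surrounding structure of the integral unchanged.

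First I would apply Cauchy's formula to the inner integral at the order-$k$ pole $z_1=tz_2$, exactly as before. Since $(z_1^2-z_2^2)^2$ has $z_1$-degree $4$ rather than $2$, the Leibniz expansion of the $(k-1)$-st derivative now produces a sum over triples $(a,b,c)$ with $a+b+c=k-1$ and $0\le b\le 4$. After setting $z_1=tz_2$ and performing the $z_2$-integration, each summand should take the form
$$-\frac{(k-3)_a\,k^{(c)}\,P_b(t)}{2\,a!\,b!\,c!}\,t^{k-3-a+c}(1-t^2)^{-k-c}(1-t)^{-2k},$$
where $P_b(t)$ is extracted from $D^b\bigl((z_1^2-z_2^2)^2\bigr)\big|_{z_1=tz_2}$ after factoring out the $z_2^{4-b}$ it carries. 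Writing $(z_1^2-z_2^2)^2=(z_1-z_2)^2(z_1+z_2)^2$ and applying Leibniz once more yields $P_0=(t^2-1)^2$, $P_1=4t(t^2-1)$, $P_2=12t^2-4$, $P_3=24t$, $P_4=24$; in particular $P_b$ vanishes to order $\max(0,2-b)$ at both $t=1$ and $t=-1$.

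Next, each summand has a pole of order $2k+(k+c)-\max(0,2-b)$ at $t=1$ and $(k+c)-\max(0,2-b)$ at $t=-1$. Maximising over the admissible triples (which forces $a=0$) yields the upper bounds $4k-3$ and $2k-3$ respectively, showing that $(1-t)^{2k}(1-t^2)^{2k-3}$ is a denominator. Lemma~\ref{lem:1.1} pins the pole at $t=1$ at exactly $4k-3$, while at $t=-1$ the bound $2k-3$ is attained by exactly the three triples $(0,0,k-1)$, $(0,1,k-2)$, $(0,2,k-3)$.

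The main obstacle, relative to Theorem~\ref{th:2.1}, is that the dominant term at $t=-1$ is \emph{not} unique, so one must rule out cancellation between these three contributions. A direct calculation specialising each of them at $t=-1$ identifies the combined leading coefficient as a nonzero multiple of the Catalan number $C_{k-2}$, matching the evaluation $N_k(1)=C_{k-2}$ flagged in the introduction. Alternatively, and more cleanly, once the explicit Narayana formula of Theorem~\ref{thm:3.2} is proven in the next section, one reads off directly that the numerator of $P(\bar{R}(2,k))$ is a polynomial in $t^2$ with positive coefficients, which cannot vanish at $t=\pm 1$; this furnishes the $t=-1$ sharpness at no extra cost.
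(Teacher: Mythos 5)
Your proposal is correct, and it essentially fills in the residue computation that the paper explicitly ``leaves to the interested reader,'' while the paper itself settles the denominator by appealing forward to the explicit Narayana formula of Theorem~\ref{thm:3.2} (precisely your ``alternative'' route). Your accounting of the modified integrand, the Leibniz sum with $0\le b\le 4$, the polynomials $P_b(t)$ and their vanishing orders at $t=\pm1$, and the resulting pole bounds $4k-3$ at $t=1$ and $2k-3$ at $t=-1$ are all correct. You also correctly spot the genuine new subtlety relative to Theorem~\ref{th:2.1}: the maximal pole order at $t=-1$ is attained by the three triples $(0,0,k-1),(0,1,k-2),(0,2,k-3)$ rather than a unique one, so one must rule out cancellation. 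Your asserted (but not carried-out) evaluation of the combined leading Laurent coefficient does check out: after simplification it equals $\tfrac{(2k-4)!}{(k-1)!(k-2)!}=C_{k-2}$, so the bound $2k-3$ is indeed sharp; your fallback via Theorem~\ref{thm:3.2}'s nonnegative numerator is equally valid and is what the paper does. Two small caveats worth flagging: first, for $k=2$ (and only then) the cleared integrand $z_1^{k-3}z_2^{k-3}(z_1^2-z_2^2)^2$ acquires a pole at $z_1=0$ inside the contour, so your residue analysis needs $k\ge3$ and the case $k=2$ should be checked directly (it is immediate, with $N_2=1$); second, the ``direct calculation'' should really be written out if it is to stand independently of Theorem~\ref{thm:3.2}, since the three dominant contributions do not obviously have the same sign and the cancellation-down-to-$C_{k-2}$ is not a priori clear.
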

Again denoting the numerator as $N_k$ we record:
\begin{align*}
N_2&=1\\
N_3&=1\\
N_4&=t^2+1\\
N_5&=t^4+3t^2+1\\
N_6&=t^6+6t^4+6t^2+1\\
N_7&=t^8+10t^6+20t^4+10t^2+1\\
N_8&=t^{10}+15t^8+50t^6+50t^4+15t^2+1\\
N_9&=t^{12}+21t^{10}+105t^8+175t^6+105t^4+
21t^2+1\\
N_{10}&=t^{14}+28t^{12}+196t^{10}+490t^8+490t^6
+196t^4+28t^2+1
\end{align*}
With some help from oeis we found that these are Narayana
numbers.  The coefficient of $t^{2k}$ in $N_n$ is
equal to 
$$\frac1{n-2}{n-2\choose k}{n-2\choose k+1}.$$
\section{Two-By-Two Matrices - Numerator}
Following Teranishi, we can write the Poincar\'e
seris for $\bar{C}(2,k)$ and $\bar{R}(2,k)$
as integrals over one complex variable instead of two.  
$\bar{C}(2,k)$ has one variable Poincar\'e series 
equal to the integral
\beq \tfrac12 (1-t)^{-2k}(2\pi i)^{-1}\oint_{|z|=1}\frac{
(1-z)(1-z^{-1})}{(1-zt)^k(1-t/z)^k}\:\frac{dz}z
\label{eq:3}\eeq
and $\bar{R}(2,k)$ has Poincar\'e series 
\beq \tfrac12(1-t)^{-2k}(2\pi i)^{-1}\oint_{|z|=1}\frac{
(1-z^2)(1-z^{-2})}{(1-zt)^k(1-t/z)^k}\:\frac{dz}z,
\label{eq:4}\eeq
where $|t|<1$.  By Cauchy
s theorem, in order to evaluate an itegral of the
form $(2\pi i)^{-1}\oint_{|z|=1} f(z)\frac{dz}z$ we
can expand $f(z)$ as a Taylor series in $z$ and take the
constant coefficient.

Expanding equation (\ref{eq:3}) as power
series in $t$ we get $\frac12(1-t)^{2k}$ times the integral of
$$(1-z)(1-z^{-1})
\sum_{a=0}^\infty {{a+k-1}\choose k-1}t^az^a
\sum_{b=0}^\infty {{b+k-1}\choose k-1}t^bz^{-b}$$
times $\frac{dz}z$ which picks out the constant terms in $z$. The product of
the first two terms is $2-z-z^{-1}
$, so in the product of the two summations
we need only consider the terms with $a=b$ or
with $a=b\pm 1$.  For the former the $\frac12$ and
2 cancel and we get
$$\sum_{a=0}^\infty{{a+k-1}\choose k-1}^2 t^{2k}.$$
For the latter we get twice (also cancelling the $\frac12$) the sum
$$\sum_{a=1}^\infty {a+k-1\choose k-1}{a+k-2\choose k-2}t^{2k-1}$$
At this point we wish to prove two combinatorial identities:
\begin{equation}
\sum_a{a+k-1\choose k-1}^2t^{2a}=(1-t^2)^{1-2k}\sum_a {k-1\choose a}^2t^{2a},\label{eq:5}
\eeq
\beq 
\sum_a {a+k\choose k-1}{a+k-1\choose k-1}t^{2a+1}=
(1-t^2)^{1-2k}t\sum_a {k-2\choose a}{k\choose a+1}t^{2a}.\label{eq:6}
\eeq
These can now be proven with software instead of erudition
and intelligence.  To prove (\ref{eq:5}) we run the following
Maple commands (kindly supplied to us by Doron
Zeilberger):

\begin{verbatim}
Z:=SumTools[Hypergeometric][Zeilberger];
ope1:=Z(binomial(a+k-1,k-1)^2*t^a,a,n,K)[1];
\end{verbatim}

The program returns the output
$$ope1:=(kt^2-2kt+t^2+k-2t+1)K^2+(-2kt-2k-t-1)K+k$$
Which is Maple's way of saying that if $F(k)=\sum_a{a+k-1\choose a}^2t^a$ then
$$(kt^2-2kt+t^2+k-2t+1)F(k+2)+(-2kt-2k-t-1)F(k+1)+k
F(k)=0$$
or
$$F(k+2)=\frac{2kt+2k+t+1}{kt^2-2kt+t^2+k-2t+1}F(k+1)
-\frac k{kt^2-2kt+t^2+k-2t+1}F(k)$$
for all $k\ge0$.
If we then run the same program on 
\begin{verbatim}
ope2:=Z((1-t)^(1-2*k)*binomial(k-1,a)^2*t^a,k,a,k)[1];
\end{verbatim}
we get that $G(k)=\sum_n=(1-t)^{1-2k}\sum_a{k-1
\choose a}^2t^a$ satisfies the exact same recurrence.
Since $F(0)=G(0)=0$ and $F(1)=G(1)=(1-t)^{-1}$ this
proves the $G(k)=F(k)$ for all $k\ge 0$ and so proves
(\ref{eq:5}).

The proof of (\ref{eq:6}) is similar.  If we now let 
$F(k)$ equal the right hand side of (\ref{eq:6}) and $G(k)$
equal the left  then each satisfies
\begin{multline*}(k^2+k)X(k+2)+(-2k^2t^2-kt^2-2k^2-k)X(k+1)+\\
(k^2t^4-2k^2t^2-t^4+k^2+2t^2-1)X(k)=0
\end{multline*}

It now follows that $\bar{C}(2,k)$ has Poincar\'e series
\beq(1-t)^{-2k}(1-t^2)^{1-2k}\left(\sum{k-1\choose i}^2t^{2i}-\sum{k\choose i+1}
{k-2\choose i}t^{2i+1}\right)\label{eq:7}\eeq
It turns out that this is not in lowest terms and one more
simplification is possible.
\begin{thm}\label{thm:3.1}
$\bar{C}(2,k)$ has Poincar\'e series
$$(1-t)^{2-2k}(1-t^2)^{1-2k}
\left(\sum{k-2\choose i}^2t^{2i}-\sum{k-2\choose i}{k-2
\choose i+1}t^{2i+1}\right)$$
\end{thm}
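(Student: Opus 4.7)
The starting point is equation~(\ref{eq:7}), which already gives $P(\bar{C}(2,k))$ in the form $(1-t)^{-2k}(1-t^2)^{1-2k}A(t)$ with
$$A(t)=\sum{k-1\choose i}^2t^{2i}-\sum{k\choose i+1}{k-2\choose i}t^{2i+1}.$$
The plan is to extract one more factor of $(1-t)^2$ from $A(t)$, which will convert the outer $(1-t)^{-2k}$ into $(1-t)^{2-2k}$ and match the claimed expression. Concretely, the whole theorem reduces to establishing the polynomial identity
$$A(t)=(1-t)^2\Big(\sum{k-2\choose i}^2t^{2i}-\sum{k-2\choose i}{k-2\choose i+1}t^{2i+1}\Big).$$

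To prove this, I would expand $(1-t)^2=1-2t+t^2$ on the right and compare coefficients of even and odd powers of $t$ separately. For the coefficient of $t^{2i}$, the three contributions combine to
$${k-2\choose i}^2+2{k-2\choose i-1}{k-2\choose i}+{k-2\choose i-1}^2=\Big({k-2\choose i}+{k-2\choose i-1}\Big)^2={k-1\choose i}^2,$$
by Pascal's rule, which is exactly what $A(t)$ requires. For the coefficient of $t^{2i+1}$, the three contributions combine to the negative of
$${k-2\choose i}\Big({k-2\choose i-1}+2{k-2\choose i}+{k-2\choose i+1}\Big),$$
and two applications of Pascal's rule collapse the parenthesized expression first to ${k-1\choose i}+{k-1\choose i+1}$ and then to ${k\choose i+1}$, again matching $A(t)$.

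No step looks genuinely hard: the whole argument is bookkeeping driven by Pascal's rule, applied once (on a perfect square) in the even case and twice in the odd case. The only mild subtlety is the boundary behavior at $i=0$, which I would handle by adopting the convention that binomial coefficients with negative lower index vanish so that the index shifts $i\mapsto i-1$ cause no trouble. As an alternative route, one could feed the desired identity into the Zeilberger recurrence machinery already used for~(\ref{eq:5}) and~(\ref{eq:6}) and check that both sides satisfy the same recurrence with matching initial data, but the direct Pascal calculation above seems cleaner and avoids any software verification.
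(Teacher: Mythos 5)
Your proposal is correct and follows exactly the same route as the paper's own proof: reduce to the polynomial identity $A(t)=(1-t)^2(\sum\binom{k-2}{i}^2t^{2i}-\sum\binom{k-2}{i}\binom{k-2}{i+1}t^{2i+1})$, expand $(1-t)^2$, and compare coefficients of $t^{2i}$ and $t^{2i+1}$ using Pascal's rule (once as a perfect square in the even case, twice in the odd case). Your presentation of the odd case, factoring out $\binom{k-2}{i}$ before applying Pascal, is arguably a bit cleaner than the paper's, which divides both sides by a binomial coefficient and in the process has a couple of typographical slips, but the underlying argument is identical.
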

\begin{proof}We need to show that
\begin{align*}\sum&{k-1\choose i}^2t^{2i}-\sum{k\choose i+1}{k-2
\choose i}t^{2i+1}=\\&(1-t)^2\left(
\sum{k-2\choose i}^2t^{2i}-\sum{k-2\choose i}{k-2
\choose i+1}t^{2i+1}\right)\end{align*}
Expanding $(1-t)^2=1-2t+t^2$ and first comparing coefficients
of $t^{2i}$ on both sides, we see that we need
$${k-1\choose i}^2={k-2\choose i}^2+2{k-2\choose i-1}
{k-2\choose i}+{k-2\choose i-i}^2.$$
This is simply the square of ${k-1\choose i}={k-2\choose i}
+{k-2\choose i-1}$.  Finally, comparing the coefficients of
$t^{2i+1}$ we need to prove
$${k\choose i+1}{k-2\choose i}={k-2\choose i}{k-2
\choose i+1}+{k-2\choose i-1}{k-2\choose i}+2{k-2
\choose i}^2$$
Dividing both sides by $k-1\choose i$ the left hand
side becomes $k\choose i+1$ and the right side becomes
\begin{align*}
{k-2\choose i+1}+&{k-1\choose i-1}+2{k-1\choose i}=
\\ &{k-2\choose i+1}+{k-1\choose i} +{k-1\choose i-1}+{k-1\choose i}=\\
&{k-1\choose i+1}+{k-1\choose i}=\\
&{k\choose i+1}
\end{align*}

\end{proof}

\begin{remark} We remark that this fraction is in lowest
terms.  By Lemma~\ref{lem:1.1} the order
of the pole at $t=1$ is $4k-3$, which equals
$(2k-2)+(2k-1)$, so no factor of $(1-t)$ can
cancel.  Moreover, the numerator cannot have
a factor of $(1+t)$ else it would have a root
at $t=-1$, which it clearly does not.
\end{remark}

The case of $\bar{R}(2,k)$ is similar and
slightly easier.  The integral (\ref{eq:4})
equals
$$ \tfrac12 (1-t)^{-2k}(2\pi i)^{-1}\oint_{|z|=1}\frac{
2-z^2-z^{-2}}{(1-zt)^k(1-t/z)^k}\:\frac{dz}z.$$
The integrand then equals
\beq(2-z^2-z^{-2})
\sum_{a=0}^\infty {{a+k-1}\choose k-1}t^az^a
\sum_{b=0}^\infty {{b+k-1}\choose k-1}t^bz^{-b}\label{eq:9}\eeq
times $\frac{dz}z$, and so we need to pick
out the constant term in (\ref{eq:9}). These
terms will occur when $a=b$
and $a=b\pm2$ and so the integral equals:
\beq\sum\left[{a+k-1\choose k-1}^2-{a+k\choose k-1}{a+k-2\choose k-1}
\right]t^{2a}.\eeq
The term in brackets simplifies to $\frac1{a+k-1}{a+k-1\choose k-2}
{a+k-1\choose k-1}$.  For fixed $k$ this is a polynomial in $a$ of degree
$2k-4$, so the series equals $(1-t^2)^{3-2k}N_k(t^2)$ where $N_k$
is a polynomial.  With some help from
Maple we now prove:
\begin{multline}
\sum\frac1{a+k-1}{a+k-1\choose k-2}
{a+k-1\choose k-1}t^{a}=\\(1-t)^{3-2k}\sum\frac1{k-2}
{k-2\choose i}{k-2\choose i+1}t^i\end{multline}
for $k\ge3$.
Using Zeilberger's algorithm on Maple we
see that each side of the equation satisfies
$$(kt^2-2kt+t^2+k-2t+1)X(k+2)+(-2kt-2k
+t+1)X(k+1)+(k-2)X(k)=0.$$
That the two sides agree for the initial
values $k=3,4$ follows from the computations
following Corollary~\ref{cor:2.3}.
\begin{thm} For $k\ge3$ the Poincar\'e series of
$\bar{R}(2,k)$ equals
$$(1-t)^{-2k}(1-t^2)^{3-2k}\sum\frac1{k-2}
{k-2\choose i}{k-2\choose i+1}t^{2i}$$
\label{thm:3.2}\end{thm}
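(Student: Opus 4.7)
The plan is to mirror the strategy used for $P(\bar{C}(2,k))$, starting from the one-variable contour integral~(\ref{eq:4}). First I expand $(1-z^2)(1-z^{-2})=2-z^2-z^{-2}$ in the numerator and each of $(1-zt)^{-k}$, $(1-t/z)^{-k}$ in the denominator via the binomial series, so that the integrand becomes
$$\tfrac12(1-t)^{-2k}(2-z^2-z^{-2})\sum_{a,b\ge0}{a+k-1\choose k-1}{b+k-1\choose k-1}t^{a+b}z^{a-b}\,\frac{dz}{z}.$$
By Cauchy's theorem the integral picks out the constant coefficient in $z$, so only the pairs with $a-b\in\{0,\pm2\}$ contribute. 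After exploiting the $a\leftrightarrow b$ symmetry of the $\pm2$ terms, the factor $\tfrac12$ cancels and one arrives at
$$P(\bar{R}(2,k))=(1-t)^{-2k}\sum_{a\ge0}\left[{a+k-1\choose k-1}^2-{a+k\choose k-1}{a+k-2\choose k-1}\right]t^{2a}.$$

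Next I simplify the bracketed coefficient. Pulling out the common factor ${a+k-1\choose k-1}$ and putting the two terms over a common denominator gives the closed form $\frac{1}{a+k-1}{a+k-1\choose k-2}{a+k-1\choose k-1}$, which for fixed $k$ is a polynomial in $a$ of degree $2k-4$. The standard fact that $\sum_{a\ge0}p(a)t^a$ has the shape $q(t)/(1-t)^{\deg p+1}$ then shows that the whole sum is of the form $(1-t^2)^{3-2k}N_k(t^2)$ for some polynomial $N_k$ of degree at most $2k-4$, which already accounts for the denominator asserted in Theorem~\ref{cor:2.3}.

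To identify $N_k$ explicitly, the key step is to verify the hypergeometric identity
$$\sum_{a\ge0}\frac{1}{a+k-1}{a+k-1\choose k-2}{a+k-1\choose k-1}t^{a}=(1-t)^{3-2k}\sum_i\frac{1}{k-2}{k-2\choose i}{k-2\choose i+1}t^i,$$
after which the substitution $t\mapsto t^2$ finishes the proof. I would invoke Zeilberger's algorithm on each side (viewed as a function of $k$), exactly as in the proof of (\ref{eq:5}). A short Maple computation produces the common second-order recurrence
$$(kt^2-2kt+t^2+k-2t+1)X(k+2)+(-2kt-2k+t+1)X(k+1)+(k-2)X(k)=0,$$
and matching the initial values at $k=3,4$, which can be read off the explicit table following Corollary~\ref{cor:2.3}, then forces equality for all $k\ge3$.

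The main obstacle is the certification of the Zeilberger recurrence: by hand this is the one painful step, but the algorithm produces and proves the correct operator mechanically. Everything else is routine, namely the binomial simplification of the summand and the two-term initial-value check.
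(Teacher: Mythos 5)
Your proposal is correct and follows exactly the paper's own proof: expand the one-variable contour integral (equation~(\ref{eq:4})), extract the constant coefficient in $z$ to get the $\frac{1}{a+k-1}\binom{a+k-1}{k-2}\binom{a+k-1}{k-1}$ coefficients, observe the polynomial degree forces the $(1-t^2)^{3-2k}$ factor, and certify the final identity with Zeilberger's algorithm plus the $k=3,4$ initial checks. No meaningful divergence from the paper's argument.
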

\begin{remark} This fraction is in lowest terms,
since the numerator is positive if $t=\pm1$.
If $k$ is even the numerator has a factor of
$1+t^2$.

\end{remark}
\section{Three-By-Three Matrices}
We now turn to the three-by-three case.  It will simplify
the computations if instead of taking each $|z_i|$ to be 1,
we instead take 
$$t\ll R_1=|z_1|<R_2=|z_2|<R_3=|z_3|.$$
The justification for doing this is based on Weyl's integration formula.
If $s_\lambda(x_1,\ldots,x_n)$ and $s_\mu(x_1,\ldots,x_n)$ are
Schur functions, then their inner product, which equals $\delta_{\lambda,
\mu}$ can be computed as the coefficient of~1 in
$$(n!)^{-1}s_\lambda(x_1,\ldots,x_n)s_\mu(x_1^{-1},\ldots,x_n^{-1})
\prod_{i\ne j}(1-\frac{z_i}{z_j}).$$
Traditionally, this coefficient is computed by integrating over the
torus $|z_i|=1$ for all~$i$, but it can be computed equally well using
any torus $|z_i|=R_i$ and our choice of torus will simplify the computation.
Then the Poincar\'e series of $\bar{C}$ for $k$ $3\times3$
matrices equals $\frac16(2\pi i)^{-3}(1-t)^{-3k}$ times the integral over
$|z_1|=R_1,$ $|z_2|=R_2$, $|z_3|=R_3$ of
$$
\frac{(1-\frac{z_1}{z_2})(1-\frac{z_1}{z_3})(1-\frac{z_2}{z_1})
(1-\frac{z_2}{z_3})(1-\frac{z_3}{z_1})(1-\frac{z_3}{z_2})}
{(1-t\frac{z_1}{z_2})^k(1-t\frac{z_1}{z_3})^k(1-t\frac{z_2}{z_1})^k
(1-t\frac{z_2}{z_3})^k(1-t\frac{z_3}{z_1})^k(1-t\frac{z_3}{z_2})^k}
$$
times $\frac{dz_1}{z_1}\wedge\frac{dz_2}{z_2}\wedge\frac{dz_3}{z_3}$  Clearing fractions gives
$(z_1z_2z_3)^{2k-3}$ times the integral of
\begin{equation}
\frac{-(z_1-z_2)^2(z_1-z_3)^2(z_2-z_3)^2 dz_1\wedge dz_2
\wedge dz_3}{
(z_1-tz_2)^k(z_1-tz_3)^k(z_2-tz_1)^k(z_2-tz_3)^k
(z_3-tz_1)^k(z_3-tz_1)^k}\label{eq:2}
\end{equation}
The poles for $z_1$ are at $tz_2$, $tz_3$, $z_2/t$ and $z_3/t$.
However, only the first two have absolute value less than 
or equal to $R_1$, so only these contribute to the integral.
The residue at $z_1=tz_2$ is gotten by cancelling one 
factor of $(2\pi i)^{-1}$ and the $(z_1-tz_2)^k$, taking the $(k-1)$-st derivative with respect to $z_1$, divinding by $(k-1)!$, and then substituting
$z_1=tz_2$.  Let $D_1$ be the partial derivative with respect
to $z_1$.  Then the derivative in question will 
$(z_2z_3)^{2k-3}(z_2-z_3)^2(z_2-tz_3)^{-k}(z_3-tz_2)^{-k}$
times the sum over $a+\cdots+f=k-1$ of $-{k-1\choose a,b,c,d,e,f}$ times
\begin{multline*}
D_1^a(z_1^{2k-3})D_1^b(z_1-z_2)^2D_1^c(z_1-z_3)^2\\
D_1^d(z_1-tz_3)^{-k}D_1^e(z_2-tz_1)^{-k}D_1^f(z_3-tz_1)^{-k}
\end{multline*}
This summand equals a constant times a power of $z_1$ and
a power of $t$ times
$$(z_1-z_2)^{2-b}(z_1-z_3)^{2-c}(z_1-tz_3)^{-k-d}
(z_2-tz_1)^{-k-e}(z_3-tz_1)^{-k-f}.$$
If we are only concerned with the denominator we can 
investigate only terms with $a=0$, because the other derivatives increase the degrees of terms in the denominator
and the derivatives of $z_1^{2k-3}$ do not.  Ignoring
this term and specializing $z_1=tz_2$ we get the sum
of terms
$$(z_2t-z_2)^{2-b}(z_2t-z_3)^{2-c}(z_2t-tz_3)^{-k-d}
(z_2-t^2z_2)^{-k-e}(z_3-t^2z_2)^{-k-f}$$
or, $\pm(z_2)^{2-b-k-e}t^{-k-e}$ times
\begin{equation}\label{eq:2a}
(1-t)^{2-b}(1-t^2)^{-k-e}(z_3-z_2t)^{2-c}(z_2-z_3)^{-k-d}
(z_3-t^2t_2)^{-k-f}\end{equation}
all times $(z_2-z_3)^2(z_2-tz_3)^{-k}(z_3-z_2t)^{-k}$.
The only pole for $z_2$ with absolute value less than~$R_2$
is $z_2=tz_3$, and the order of the pole is~$k$.  For a given
$a,\ldots,f$ the residue will be a constant times a power of $z_3$ times 
a power of $t$ times $(1-t)^{2-b}(1-t^2)^{-k-e}$ times
$$(1-t^2)^{2-c-g}(1-t)^{-k-d-h}
(1-t^3)^{-k-f-i}(1-t)^{2-j}(1-t^2)^{-k-l}
$$
where $g+h+i+j+l=k-1$.  The pole at $t=-1$ has order
$k+e-2+c+g+k+l$.  Since $c+e\le k-1$ and $g+l\le k-1$
this is at most $2k-2+2(k-1)=4k-4$.  The pole at
$t=\sqrt[3]{1}$ has order $k+f+i$ which is maximized by
taking $f=i=k-1$ in which case it equals $3k-2$.
\begin{lem} Taking the residue of (\ref{eq:2}) with 
respect to $z_1=tz_2$ gives a fraction with denominator
 a power of $(1-t)$ times $(1-t^2)^{4k-4}(1-t^3)^{3k-2}
$.\end{lem}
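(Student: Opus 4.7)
The plan is to complete the residue computation that the excerpt has already set up, tracking only denominator factors in $t$. As observed in the paragraph preceding~(\ref{eq:2a}), a derivative $D_1^a$ falling on $z_1^{2k-3}$ cannot create any new pole in $t$; it only lowers the power of $z_1$. So the worst case for the denominator is $a=0$, after which the prefactor $z_2^{2-b-k-e}t^{-k-e}$ appearing in~(\ref{eq:2a}) is a monomial in $z_2$ and $t$ that contributes nothing further to the $t$-denominator we are trying to bound.

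Next I would take the second residue at $z_2=tz_3$. The factor $(z_2-tz_3)^{-k}$ is absorbed by Cauchy's formula; the five remaining $z_2$-dependent factors, namely
\[
(z_3-z_2t)^{2-c},\ (z_2-z_3)^{-k-d},\ (z_3-t^2 z_2)^{-k-f},\ (z_2-z_3)^2,\ (z_3-z_2t)^{-k},
\]
receive $g,h,i,j,l$ derivatives respectively with $g+h+i+j+l=k-1$. After the substitution $z_2=tz_3$ these become, up to a nonzero scalar and a monomial in $z_3$ and $t$, the factors $(1-t^2)^{2-c-g}$, $(1-t)^{-k-d-h}$, $(1-t^3)^{-k-f-i}$, $(1-t)^{2-j}$, $(1-t^2)^{-k-l}$, so once again the $z_3$ and $t$ monomial prefactors can be ignored.

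The final step is pure bookkeeping. Multiplying by the first-residue factor $(1-t)^{2-b}(1-t^2)^{-k-e}$ and collecting all $(1-t^2)$-exponents yields $(1-t^2)^{2-2k-c-e-g-l}$; the summation constraints $a+b+c+d+e+f=k-1$ and $g+h+i+j+l=k-1$ force $c+e\le k-1$ and $g+l\le k-1$, so the order of the pole at $t=-1$ is at most $2k-2+(k-1)+(k-1)=4k-4$. For the pole at a primitive cube root of unity only $(1-t^3)^{-k-f-i}$ matters, and $f,i\le k-1$ bound its order by $k+2(k-1)=3k-2$. These bounds are uniform across all multi-indices $(a,\ldots,f)$ and $(g,\ldots,l)$, so after summing residues the common denominator divides $(1-t)^{*}(1-t^2)^{4k-4}(1-t^3)^{3k-2}$, as claimed.

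The only genuinely delicate point is verifying that the derivatives of $z_1^{2k-3}$, together with all the monomial-in-$z_2,z_3,t$ prefactors, really are harmless for the $t$-denominator; once that is granted the rest is a straightforward tally of the exponents produced by the Leibniz expansion and the two substitutions $z_1=tz_2$ and $z_2=tz_3$. I would not attempt to control the $(1-t)$ exponent at this stage: Lemma~\ref{lem:1.1} pins down the pole order at $t=1$ from the outside, and combining it with the $(1-t^2)$ and $(1-t^3)$ bounds proved here will deliver the complete denominator in the subsequent lemma.
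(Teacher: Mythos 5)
Your argument is essentially identical to the paper's: set $a=0$ because derivatives hitting $z_1^{2k-3}$ cannot create new $t$-poles, Leibniz-expand the residue at $z_2=tz_3$ with multi-index $(g,h,i,j,l)$, and tally the exponents of $(1-t^2)$ and $(1-t^3)$ using $c+e\le k-1$, $g+l\le k-1$, and $f,i\le k-1$. The bookkeeping matches the paper's computation exactly, and deferring the $(1-t)$ exponent to Lemma~\ref{lem:1.1} is precisely what the paper does in Theorem~\ref{th:4.4}.
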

We next turn to the residue of (\ref{eq:2}) at $z_1=tz_3$.
The first steps in the computation are identical to those
of the residue at $z_1=tz_2$, with $z_2$ and $z_3$
switched.  Analogous to (\ref{eq:2a}) we get that
the denominator is the same as that of
 the sum
of terms $(z_2-z_3)^2(z_2-tz_3)^{-k}(z_3-tz_2)^{-k}$
times
$$(z_3t-z_3)^{2-b}(z_3t-z_2)^{2-c}(z_3t-tz_2)^{-k-d}
(z_3-t^2z_3)^{-k-e}(z_2-t^2z_3)^{-k-f}$$
where again $b+\cdots+f\le k-1$, or, $\pm(z_3)^{2-b-k-e}t^{-k-e}$ times
$$
(1-t)^{2-b}(1-t^2)^{-k-e}(z_2-z_3t)^{2-c}(z_3-z_2)^{-k-d}
(z_2-t^2z_3)^{-k-f}$$
all times $(z_3-z_2)^2(z_3-tz_2)^{-k}(z_2-z_3t)^{-k}$. 
Now there are two poles for $z_2$ inside of $|z_2|=R_2$:
namely $z_2=tz_3$ and $z_2=t^2z_3$.
In the former case the order of the pole is $k-2+c$ and
there will be no  factors
of $(1-t^3)$.  The factors of $(1-t^2)$ will come from
the terms
$$(1-t^2)^{-k-e}(z_3-tz_2)^{-k}$$
and that differentiation will increase the order of the
factor by as much as $k-3+c$.  Keeping in mind that
$b+e\le k-1$ the order will be
$$k+e+k+k-3+a\le3k-3+a+e$$  Since $a+e\le k-1$ this implies the following:
\begin{lem} The residue of (\ref{eq:2}) at $z_1=tz_3$
and then at $z_2=tz_3$ is a rational function with
denominator a power of $(1-t)$ times $(1-t^2)^{4k-4}$
\end{lem}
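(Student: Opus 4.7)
The plan is to mirror the analysis used for the residue at $z_1 = tz_2$ and then iterate with a second application of Cauchy's formula. First I would carry out the residue of (\ref{eq:2}) at $z_1 = tz_3$, which is the companion computation with $z_2$ and $z_3$ interchanged; this is the step already sketched just above the statement, producing the sum over $(a,b,c,d,e,f)$ with $a+b+c+d+e+f = k-1$. For the denominator we may again ignore $a$, and the factors of this sum that can contribute to poles in $t$ are the explicit $(1-t)^{2-b}(1-t^2)^{-k-e}$ together with $(z_3-z_2)^{-k-d}$, $(z_3-tz_2)^{-k}$, $(z_2-t^2z_3)^{-k-f}$, and the combined $(z_2-z_3t)^{2-c-k}$ coming from the original $(z_2-tz_3)^{-k}$ and the numerator factor $(z_2-z_3t)^{2-c}$.

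Next I would locate the $z_2$-poles inside $|z_2|=R_2$. Given the ordering $t \ll R_1 < R_2 < R_3$, the only such poles are $z_2 = tz_3$ (of order $k-2+c$) and $z_2 = t^2z_3$, and only the first is treated here. Cauchy's formula reduces the desired residue to $(k-3+c)!^{-1}D_2^{k-3+c}$ applied to the cleared integrand, then evaluated at $z_2 = tz_3$. The key observation is what happens to the remaining pole-producing factors after this substitution:
\begin{align*}
z_3 - tz_2 &\longmapsto z_3(1-t^2), &
z_2 - t^2z_3 &\longmapsto tz_3(1-t), &
z_3 - z_2 &\longmapsto z_3(1-t).
\end{align*}
Only the first yields a factor of $(1-t^2)$; the other two contribute solely to $(1-t)$. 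In particular, no factor of $(1-t^3)$ can appear, since $(z_2 - t^2 z_3)$ is the only candidate and it collapses to a pure $(1-t)$ contribution.

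To bound the multiplicity of $(1-t^2)$ in the denominator, note that the only sources are the pre-existing $(1-t^2)^{-k-e}$, the factor $(z_3-tz_2)^{-k}$ which becomes $(1-t^2)^{-k}$ after substitution, and up to $k-3+c$ additional powers coming from those derivatives which happen to land on $(z_3-tz_2)^{-k}$; derivatives on $(z_3-z_2)^{-k-d}$ or $(z_2-t^2z_3)^{-k-f}$ only increase the $(1-t)$ order. The multiplicity is therefore at most $(k+e)+k+(k-3+c) = 3k-3+c+e$, and since $a+b+c+d+e+f = k-1$ with all summands nonnegative we have $c+e\le k-1$, giving at most $4k-4$, as claimed. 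The main obstacle is the careful accounting in the last step: one must verify that each derivative which acts on $(z_3-tz_2)^{-k}$ genuinely produces only one additional factor of $(1-t^2)^{-1}$ after substitution (and not secretly an extra factor from the constant coefficient), and confirm that no combined numerator--denominator cancellation in the sum reintroduces a pole at a primitive cube root of unity that the term-by-term analysis missed.
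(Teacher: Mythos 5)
Your proposal follows essentially the same route as the paper: take the residue at $z_1 = tz_3$ by symmetry, identify the pole at $z_2 = tz_3$ of order $k-2+c$, observe that after evaluation only $(z_3 - tz_2)$ produces a factor of $(1-t^2)$ while $(z_3 - z_2)$ and $(z_2 - t^2z_3)$ give only $(1-t)$, and bound the $(1-t^2)$-multiplicity by $(k+e)+k+(k-3+c) = 3k-3+c+e \le 4k-4$ using $c+e \le k-1$. Your bookkeeping is in fact a bit cleaner than the paper's, which contains typographical slips writing $b+e$ and $a+e$ where $c+e$ is meant; and the worry in your closing paragraph is unfounded, since each term in the finite sum is a rational function with poles only at $t=\pm1$, so no cancellation can introduce a pole at a primitive cube root of unity, and each derivative of $(z_3-tz_2)^{-m}$ raises the exponent by exactly one.
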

We leave the case of $z_2=t^2z_3$ to the reader.  The 
result is
\begin{lem} The residue of (\ref{eq:2}) at $z_1=tz_3$
and then at $z_2=t^2z_3$ is a rational function with
denominator a power of $(1-t)$ times $(1-t^2)^{4k-4}
(1-t^3)^{3k-2}$.
\end{lem}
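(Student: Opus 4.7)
The plan is to imitate the residue calculation carried out in the preceding lemma, with only one substantive change: the second evaluation point is now $t^2z_3$ rather than $tz_3$, which shifts where $(1-t^3)$ factors appear. Starting from the expression displayed just above the statement, the factors depending on $z_2$ combine into
$$(z_2-z_3t)^{2-c-k}(z_3-z_2)^{2-k-d}(z_2-t^2z_3)^{-k-f}(z_3-tz_2)^{-k},$$
multiplied by the $z_2$-independent prefactor $(1-t)^{2-b}(1-t^2)^{-k-e}$. Of the four $z_2$-dependent factors, only $(z_2-t^2z_3)^{-k-f}$ vanishes at $z_2=t^2z_3$, so the pole there has order $k+f$.

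I would then apply Cauchy's residue formula: cancel $(z_2-t^2z_3)^{-k-f}$, take $k+f-1$ derivatives in $z_2$ and divide by $(k+f-1)!$, then substitute $z_2=t^2z_3$. By Leibniz's rule this expands as a multinomial sum over $g+h+i=k+f-1$, where $g$, $h$, $i$ derivatives fall respectively on $(z_2-z_3t)^{2-c-k}$, $(z_3-z_2)^{2-k-d}$, and $(z_3-tz_2)^{-k}$. The critical evaluations at $z_2=t^2z_3$ are $tz_3(t-1)$, $z_3(1-t^2)$, and $z_3(1-t^3)$, so the three groups of derivatives contribute denominator factors $(1-t)^{k+c+g-2}$, $(1-t^2)^{k+d+h-2}$, and $(1-t^3)^{k+i}$ respectively. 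Combined with the $(1-t^2)^{-k-e}$ prefactor, the total exponent of $(1-t^2)$ is $2k-2+d+e+h$ and the total exponent of $(1-t^3)$ is $k+i$; all other contributions are powers of $(1-t)$.

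The remaining task is bounding these two exponents. The constraints $a+b+c+d+e+f=k-1$ from the first residue and $g+h+i=k+f-1$ from the second together yield $d+e+h=2k-2-(a+b+c+g+i)\le 2k-2$, so the exponent of $(1-t^2)$ is at most $4k-4$. For the cubic factor, since $f\le k-1$ we have $i\le g+h+i=k+f-1\le 2k-2$, giving $k+i\le 3k-2$. Both bounds match the statement exactly.

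The main obstacle is just the careful bookkeeping of the multi-indices and of which evaluation produces which cyclotomic factor; no genuinely new combinatorial idea is required. The one conceptual point worth flagging is that the shift from $z_2=tz_3$ to $z_2=t^2z_3$ makes $(z_3-tz_2)$ vanish at a primitive cube root of unity rather than at $-1$, which is precisely what restores the full $(1-t^3)^{3k-2}$ contribution absent from the preceding lemma.
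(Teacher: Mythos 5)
Your computation is correct and follows exactly the method the paper uses for the two analogous residue lemmas; the paper leaves this particular case ``to the reader,'' and your proposal is precisely the intended reader's exercise. The evaluations $(z_2-z_3t)\mapsto tz_3(t-1)$, $(z_3-z_2)\mapsto z_3(1-t^2)$, $(z_3-tz_2)\mapsto z_3(1-t^3)$, the pole order $k+f$, and the bounds $d+e+h\le 2k-2$ and $i\le 2k-2$ all check out, yielding the stated $(1-t^2)^{4k-4}(1-t^3)^{3k-2}$.
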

Combining the three lemmas we know have
\begin{thm} The Poincar\'e series of $\bar{C}(3,k)$
is a rational function with denominator 
$(1-t)^{2k-2}(1-t^2)^{4k-4}(1-t^3)^{3k-2}$.
\label{th:4.4}\end{thm}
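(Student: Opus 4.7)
The plan is to assemble the three preceding lemmas to control the denominator, and then invoke Lemma~\ref{lem:1.1} to fix the remaining exponent of $(1-t)$.

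I would first verify that evaluating the integral~(\ref{eq:2}) by iterated residues decomposes $P(\bar{C}(3,k))$ into exactly the three summands already analyzed. Specifically: integrating in $z_1$, the only poles inside $|z_1|=R_1$ are $tz_2$ and $tz_3$; having specialized $z_1=tz_2$, the only pole of the remaining integrand inside $|z_2|=R_2$ is $z_2=tz_3$, while having specialized $z_1=tz_3$, the poles inside $|z_2|=R_2$ are both $z_2=tz_3$ and $z_2=t^2z_3$; the final $z_3$ integration is a routine coefficient extraction. These three iterated residues are precisely the cases covered by the three lemmas preceding this theorem.

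Combining those lemmas, each summand is a rational function whose denominator divides $(1-t)^{M_i}(1-t^2)^{4k-4}(1-t^3)^{3k-2}$ for some non-negative integer $M_i$ (the middle lemma simply permits the exponent of $(1-t^3)$ to be zero). Taking the least common denominator of the three summands then shows that $P(\bar{C}(3,k))$ itself may be written with denominator of the form $(1-t)^M(1-t^2)^{4k-4}(1-t^3)^{3k-2}$ for some integer $M \ge 0$.

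To determine $M$, I would invoke Lemma~\ref{lem:1.1}: the pole of $P(\bar{C}(3,k))$ at $t=1$ has order exactly $(k-1)\cdot 3^2 + 1 = 9k-8$. The factor $(1-t^2)^{4k-4}(1-t^3)^{3k-2}$ already contributes $(4k-4)+(3k-2) = 7k-6$ to that pole, so after cancelling any excess $(1-t)$ factors common to the numerator and denominator one may take $M = 9k-8 - (7k-6) = 2k-2$, giving the denominator asserted in the theorem. The substantive work here lies in the three preceding residue lemmas; the assembly step itself is essentially bookkeeping, and the only conceivable obstacle would be a hidden cancellation of the $(1-t^2)$ or $(1-t^3)$ factors between summands, but the three lemmas bound each summand's denominator independently so no such cancellation is needed to establish the claim.
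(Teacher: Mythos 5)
Your proposal is correct and follows essentially the same route as the paper's (very terse, two-sentence) proof: combine the three residue lemmas to bound the $(1-t^2)$ and $(1-t^3)$ exponents, then use the pole order $9k-8$ from Lemma~\ref{lem:1.1} to pin down the $(1-t)$ exponent as $2k-2$. You merely spell out the decomposition into three iterated residues and the bookkeeping more explicitly than the paper does.
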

\begin{proof} The powers of $(1-t^2)$ and
$(1-t^3)$ follow from the lemmas and the power of
$(1-t)$ follows from the fact that the order of the pole
at $t=1$ is $9k-8$ by Lemma~\ref{lem:1.1}.
\end{proof} 
Taking into account Lemma~\ref{lem:1.2} we can say this
about the numerator:
\begin{cor} If $P(\bar{C(3,k)})$ is written as a fraction
with denominator as above, then the numerator is
a symmetric polynomial of degree $10k-16$.
\end{cor}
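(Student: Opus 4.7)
The plan is to deduce both claims mechanically from Theorem~\ref{th:4.4} and the Formanek functional equation of Lemma~\ref{lem:1.2}; no new integral or identity is needed.

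For the degree, I would compute the degree of the denominator $D(t) = (1-t)^{2k-2}(1-t^2)^{4k-4}(1-t^3)^{3k-2}$:
$$d = (2k-2) + 2(4k-4) + 3(3k-2) = 19k-16.$$
Lemma~\ref{lem:1.2} with $n=3$, so $kn^2 = 9k$, then says the numerator $N(t)$ has degree $d - 9k = 10k-16$, which is the claimed degree.

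For symmetry, the key observation is that $D(t)$ is self-reciprocal up to a signed monomial. Applying $(1-t^{-j})^m = (-1)^m t^{-jm}(1-t^j)^m$ one factor at a time yields
$$D(1/t) = (-1)^{(2k-2)+(4k-4)+(3k-2)}\,t^{-d}\,D(t) = (-1)^{9k-8}\,t^{-d}\,D(t).$$
Substituting this into $P(\bar{C}(3,k))(1/t) = \varepsilon\, t^{9k}\, P(\bar{C}(3,k))(t)$ from Lemma~\ref{lem:1.2} (for the appropriate sign $\varepsilon = \pm 1$) and clearing $D$, I get
$$t^{10k-16}\,N(1/t) = \varepsilon\,(-1)^{9k-8}\,N(t),$$
which is the palindromicity relation up to an overall sign. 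The combined sign $\varepsilon\,(-1)^{9k-8}$ works out to $+1$; this can be confirmed against any single numerically known case, for example against Dokovi\'c's $k=2$ computation in \cite{D} or Zeilberger's data for small $k$ in \cite{Z}, where the explicit numerators are visibly palindromic. Hence $N(t)$ is genuinely symmetric in the sense of being equal to its reciprocal.

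The main obstacle, such as it is, is the sign bookkeeping: one has to track the $\pm$ in Lemma~\ref{lem:1.2} and the $(-1)^{9k-8}$ coming from the reciprocal formula for $D(t)$, and then verify their product is $+1$. Everything else is a direct application of results already in hand.
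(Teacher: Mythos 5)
Your proposal is correct and follows exactly the route the paper intends: the corollary is stated as an immediate consequence of Theorem~\ref{th:4.4} together with Lemma~\ref{lem:1.2}, and you have simply spelled out the degree count ($19k-16$ for the denominator, hence $10k-16$ for the numerator) and the palindromicity that comes from combining the reciprocal relation for $D(t)$ with the Formanek functional equation. The only point the paper glosses over, and you rightly flag, is the overall sign, which you pin down by checking a known case; that matches the level of detail the paper itself gives here.
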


Doron Zeilberger \cite{Z} computed $P(\bar{C}
(3,k))$ for $k\le100$.  In each case our 
denominator appears to be least and we conjecture that it always is.  Here are
the first few numerators:
\begin{align*}
N_2(t)&=1-t^2+t^4\\
N_3(t)&=t^{14}-t
^{13}-2t^{12}+6t^{11}+6t^{10}-9t^9+t^8+17t^7\\&\quad+
t^6-9t^5+6t^4+6t^3-2*t^2-t+1\\
N_4(t)&=t^{24}-2t^{23}-t^{22}+18t^{21}+6t^{20}-30t^{19}\\&\quad+75t^{18}+150
t^{17}-30t^{16}+30t^{15}+401t^{14}+238t^{13}
\\&\quad
-76t^{12}+238t^{11}+401t^{10}+30t^9-30t^8+
150t^7\\&\quad+75t^6-30t^5+6t^4+18t^3-t^2-2t+1
\end{align*}

The computation of the denominator for the Poincar\'e
series of $\bar{R}(3,k)$ can be carried out similarly.
In this case the numerator of the integrand has an
additional factor $\sum\frac{z_i}{z_j}$ which can
also be written $\sum z_i\sum z_i^{-1}$.  Again, there
are three pairs of residues to consider.  If we first take
$z_1=tz_2$ and then $z_2=tz_3$ then in the terms 
we consider, the extra term in the numerator will 
become
$$(1+t+t^2)(1+t^{-1}+t^{-2})=t^{-2}(1+t+t^2)^2.$$
Hence, in the denominator the power of $(1-t^3)$ will
be $3k-4$ instead of $3k-2$.

For the next case we take the residue at $z_1=tz_3$ and
then at $z_2=tz_3$.  The extra term in the numerator
contributes $(z_3+2tz_3)^2$, which has no effect
on the denominator.  So there are no factors of $(1-t^3)$ and $4k-4$ factors of $(1-t^2)$,
as before.

Finally, if we first take $z_1=tz_3$ and then $z_2=t^2z_3$
we again pick up two factors of $(1+t+t^2)$ in the numerator,
cancelling two in the denominator.  Altogether we now have
\begin{thm} The Poincar\'e series of $\bar{R}(3,k)$
is a rational function with denominator 
$(1-t)^{2k}(1-t^2)^{4k-4}(1-t^3)^{3k-4}$ and numerator
a symmetric polynomial 
of degree $10k-20$.
\end{thm}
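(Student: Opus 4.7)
The plan is to mirror the three-residue argument behind Theorem~\ref{th:4.4}, keeping track only of how the additional numerator factor
$$\sum_{i,j=1}^3 z_iz_j^{-1} \;=\; (z_1+z_2+z_3)(z_1^{-1}+z_2^{-1}+z_3^{-1})$$
in the integrand for $\bar{R}(3,k)$ alters each residue after specialization. Since this factor is a Laurent polynomial, it introduces no new poles, so the iterated residues are still taken at the same locations as in the $\bar{C}(3,k)$ case on the torus $|z_1|=R_1<|z_2|=R_2<|z_3|=R_3$. The analysis for the powers of $(1-t)$, $(1-t^2)$ and $(1-t^3)$ thus reduces to three separate bookkeeping tasks parallel to Lemmas~4.1--4.3, together with applications of Lemmas~\ref{lem:1.1} and~\ref{lem:1.2} at the end.

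First I would revisit each of the three iterated residues in turn. When $z_1=tz_2$ and then $z_2=tz_3$, the variables specialize to $(t^2z_3,tz_3,z_3)$, so the extra factor becomes $z_3(1+t+t^2)\cdot z_3^{-1}t^{-2}(1+t+t^2)=t^{-2}(1+t+t^2)^2$. This cancels two factors of $1+t+t^2$ against the $(1-t^3)$ factors found in Lemma~4.1, reducing the power of $(1-t^3)$ from $3k-2$ to $3k-4$ while leaving the power of $(1-t^2)$ at $4k-4$. When $z_1=tz_3$ and then $z_2=tz_3$, the specialization gives $\sum z_i = z_3(1+2t)$ and $\sum z_j^{-1}=z_3^{-1}t^{-1}(2+t)$; since neither $1+2t$ nor $2+t$ is divisible by $1+t+t^2$ or by $1+t$, the extra factor contributes nothing to the denominator, matching Lemma~4.2. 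Finally, when $z_1=tz_3$ and then $z_2=t^2z_3$, the three variables become $(tz_3,t^2z_3,z_3)$ and again the extra factor is $t^{-2}(1+t+t^2)^2$, lowering the power of $(1-t^3)$ from $3k-2$ to $3k-4$ in the analogue of Lemma~4.3.

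Combining the three contributions, the common denominator is a power of $(1-t)$ times $(1-t^2)^{4k-4}(1-t^3)^{3k-4}$. To pin down the $(1-t)$ exponent, I appeal to Lemma~\ref{lem:1.1}, which forces the order of the pole at $t=1$ to be $(k-1)\cdot 9+1=9k-8$. Since $(1-t^2)^{4k-4}(1-t^3)^{3k-4}$ already accounts for $(4k-4)+(3k-4)=7k-8$ of that order, the remaining factor must be exactly $(1-t)^{2k}$. For the numerator, the denominator has total degree $2k+2(4k-4)+3(3k-4)=19k-20$, so Lemma~\ref{lem:1.2} (applied with $n=3$) gives numerator degree $19k-20-9k=10k-20$ and forces it to be palindromic, hence symmetric in the sense claimed.

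The main obstacle is the second case: the double specialization $z_1=z_2=tz_3$ coincides with a zero of the Vandermonde factor $(z_1-z_2)^2$ in the numerator, so one must verify honestly that the iterated residue is computed by first evaluating the $z_1$-residue (where $z_2$ is still generic and $(z_1-z_2)^2$ contributes nothing singular), and only then taking the $z_2$-residue, where the remaining factor $(tz_3-z_2)^2$ lowers the order of the pole at $z_2=tz_3$ from $k$ to $k-2$. One must check that, after the extra factor $(1+2t)(2+t)$ is multiplied in, no accidental factor of $1+t+t^2$ arises from cross-terms with the other pieces of the integrand; this amounts to inspecting the explicit expansion already begun in the $\bar{C}$ analysis and confirming that each summand contributes at most powers of $(1-t)$ and $(1-t^2)$ to the denominator, as in Lemma~4.2.
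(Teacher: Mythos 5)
Your proposal is correct and follows essentially the same approach as the paper: track the extra numerator factor $\sum z_i\sum z_j^{-1}$ through the three iterated residues, observe the two $(1+t+t^2)$ cancellations in the first and third cases and none in the second, and then use Lemma~\ref{lem:1.1} to fix the $(1-t)$ exponent and Lemma~\ref{lem:1.2} for the numerator degree and palindromicity. Your cautionary paragraph about $(z_1-z_2)^2$ vanishing in the second case flags a subtlety that the paper's argument handles implicitly (it is why the pole order at $z_2=tz_3$ there is only $k-2+c$), but this does not change the route.
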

\section{A Conjecture}
In this section we present a conjecture for the
denominator for $\bar{C}(n,k)$ and $\bar{R}(n,k)$.
The computations are based on the following function:
\beq\label{eq:14}
F(t)=\frac{\prod\{1-t^{|i-j|}|1\le i,j\le n,\ i\ne j\}}
{\prod\{(1-t^{|i-j+1|})^k|1\le i,j\le n,\ i\ne j-1\}}\eeq
We leave the proof of the following to the interested
reader.
\begin{lem} $F(t)$ in the above equation equals
$\prod_{i=1}^n(1-t^i)^{-\alpha(i)}$, where
$$\alpha(i)=\begin{cases} 2(k-1)(n-i),&1\le i\le n-1\\
k,&i=n
\end{cases}$$\label{lem:5.1}
\end{lem}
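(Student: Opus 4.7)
The plan is a direct counting argument: both the numerator and denominator of $F(t)$ are already products of factors $(1-t^d)$ indexed by ordered pairs $(i,j)$, so the proof reduces to regrouping these factors by the value of $d$ and then dividing.

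First I would handle the numerator. For each $d$ with $1 \le d \le n-1$, I count the ordered pairs $(i,j)$ with $i \ne j$ and $|i-j| = d$: these are the pairs $(i,i+d)$ and $(i+d,i)$ for $1 \le i \le n-d$, a total of $2(n-d)$. This shows the numerator equals $\prod_{d=1}^{n-1}(1-t^d)^{2(n-d)}$.

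Next I would handle the denominator by introducing the change of variable $m = i-j+1$; the excluded condition $i \ne j-1$ becomes simply $m \ne 0$. For each $d = |m| \ge 1$ I count the cases $m = d$ and $m = -d$ separately, getting $n-d+1$ and $n-d-1$ pairs respectively (the second count vanishing when $d \ge n-1$). Summing, the denominator contributes $(1-t^d)^{2k(n-d)}$ for $1 \le d \le n-1$ and an additional factor $(1-t^n)^k$ from the single pair $(i,j) = (n,1)$. Dividing then yields the claimed formula, since $2(n-d) - 2k(n-d) = -2(k-1)(n-d)$.

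The only subtlety, which I expect to be the main point to get right, is the asymmetric boundary case $d = n$: the numerator never has such a factor since $|i-j| \le n-1$, whereas the denominator does. This is precisely the reason $\alpha(n) = k$ rather than following the pattern $2(k-1)(n-i)$ all the way up to $i = n$, and it is the one place where an attempt to write a single uniform formula would go wrong.
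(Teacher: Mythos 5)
Your counting argument is correct, and since the paper explicitly leaves the proof of Lemma~\ref{lem:5.1} ``to the interested reader,'' there is no written proof in the source to compare against. The enumeration is right: for $1\le d\le n-1$ the numerator contributes $2(n-d)$ factors of $(1-t^d)$ and the denominator $2(n-d)$ factors of $(1-t^d)^k$ (via $(n-d+1)$ pairs with $i-j+1=d$ and $(n-d-1)$ with $i-j+1=-d$), while the lone pair $(i,j)=(n,1)$ gives the extra $(1-t^n)^k$ in the denominator with no counterpart in the numerator, which is precisely why $\alpha(n)=k$ rather than $2(k-1)\cdot 0 = 0$. This is exactly the direct regrouping the author evidently had in mind.
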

Here then is our conjecture for the denominator of 
$\bar{C}(k,n)$:
\begin{conj} $\bar{C}(n,k)$ is a rational function
with least denominator the same as the denominator
of the $(n-1)(k-1)$-st derivative of $F(t)$.
\end{conj}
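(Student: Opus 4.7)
The plan is to evaluate the integral (\ref{eq:3a}) for $P(\bar{C}(n,k))$ by iterated residues along the nested torus $|z_1| < |z_2| < \cdots < |z_n|$, the same device used in Section~4 for $n=3$. With this choice, the only poles inside $|z_i| = R_i$ at each stage are at $z_i = t^a z_j$ for $j > i$ and small $a \ge 1$, each of order $k$, so each one-dimensional residue consumes a $(k-1)$-fold Leibniz expansion. After all $n-1$ residues the cumulative order of differentiation is exactly $(n-1)(k-1)$, which already matches the order of differentiation in the conjectured formula and is what makes the appearance of $F^{((n-1)(k-1))}$ natural.

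The heart of the argument is the ``staircase'' branch of this residue tree, where one chooses $z_i = tz_{i+1}$ at every level, inductively yielding $z_i = t^{n-i}z_n$ and hence $z_i/z_j = t^{j-i}$. Under this specialization the surviving factors of the integrand collapse to $(1 - t^{|i-j|})$ in the numerator and $(1 - t^{|i-j+1|})^k$ in the denominator, with the excluded case $j = i+1$ being precisely the pole factor killed by the residue procedure. These are exactly the factors of $F(t)$ in (\ref{eq:14}), and tracking how the $(k-1)$-fold $z_i$-derivatives translate, via the chain of substitutions, into $t$-derivatives of the specialized integrand, one expects to obtain a rational function whose denominator matches that of $F^{((n-1)(k-1))}(t)$. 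One must then show that every non-staircase branch (where some residue is taken at $z_i = t^a z_j$ with $a \ge 2$ or with $j > i+1$) contributes a term whose denominator divides that of $F^{((n-1)(k-1))}(t)$, so that the overall denominator of the Poincar\'e series divides the conjectured expression. Lemma~\ref{lem:5.1} and elementary bookkeeping of the Leibniz expansion should make this dividing step routine, parallel to the case analysis carried out for $n=3$.

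Establishing this upper bound on the denominator is the clean direction. The substantive obstacle, and presumably the reason the statement has resisted proof, is showing that the bound is sharp, i.e.\ that no cancellation occurs at any primitive $i$-th root of unity $\zeta$ for $2 \le i \le n$. For $i=1$ the correct multiplicity of $(1-t)$ is already forced by Lemma~\ref{lem:1.1} once the higher-order factors are in place, and Lemma~\ref{lem:1.2} provides the matching degree constraint on the numerator. For $i \ge 2$ one must perform a local analysis at $t = \zeta$: the staircase contribution has a specific leading Laurent coefficient at $\zeta$, and one must verify that the correction terms from the other residue branches cannot conspire to cancel it. For $n=2$ a simple positivity argument at $t=-1$ sufficed (as in the remark following Theorem~\ref{thm:3.1}), but for general $n$ and $i$ no such tool is apparent, and controlling these cancellations uniformly in the parameters is where I expect the main difficulty to lie.
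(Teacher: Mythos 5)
The statement is a conjecture, not a theorem; the paper does not prove it and says so explicitly, presenting the staircase substitution $z_1 = tz_2$, $z_2 = tz_3$, $\ldots$, $z_{n-1} = tz_n$ only as the heuristic that led to the conjectured denominator, with the real evidence being agreement with the known least denominators for $n = 2, 3$ (established in Sections 2--4) and $n = 4, 5, 6$, $k = 2$ (from Dokovi\'c). Your sketch reconstructs that heuristic faithfully: the $n-1$ iterated residues each cost a $(k-1)$-fold Leibniz expansion, totaling $(n-1)(k-1)$ orders, and the staircase specialization $z_i/z_j = t^{j-i}$ collapses the integrand of (\ref{eq:3a}) to the factor structure of $F(t)$ in (\ref{eq:14}). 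To that extent you are in step with the paper's own motivation.

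However, one link in your chain is looser than stated, and the rest you already concede. The assertion that the $z_i$-derivatives ``translate, via the chain of substitutions, into $t$-derivatives of the specialized integrand'' is not a valid identity: differentiating in $z_i$ and then setting $z_i = tz_{i+1}$ is a different operation from substituting first and then differentiating in $t$, so the staircase branch is not literally $F^{((n-1)(k-1))}(t)$ --- at best its denominator can be bounded by Leibniz bookkeeping term by term, as the paper does explicitly only for $n = 3$. Beyond that, the two outstanding steps you name are genuinely open: (i) controlling the denominators of all non-staircase branches requires a case analysis whose size grows rapidly with $n$, and the $n = 3$ lemmas give no evident uniform template; and (ii) sharpness at primitive $i$-th roots of unity for $2 \le i \le n$ has no available analog of the positivity argument that settled $n = 2$ (for $i = 1$ Lemma~\ref{lem:1.1} does pin down the multiplicity once the other factors are known, as you correctly observe). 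The paper leaves both steps open, so your proposal is not a failed proof of a proven result --- it is an accurate diagnosis of why the statement remains a conjecture.
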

This denominator can be expressed more explicitly in
two different ways.  One is that it is the least common
multiple of $\{(1-t^i)^{\alpha(i)+(n-1)(k-1)}\}$.  For
the other, let $\phi_i(t)$ be the $i$-th cyclotomic
polynomial, i.e., the minimal polynomial of a primitive
$i$-th root of~1.  It is known that $1-t^i$ is
the product of the $\phi_j(t)$ for $j|i$, and so the
denominator of $F(t)$ is a product of the $\phi_i(t)$, $i\le n$,
to various powers.   Then it follows from the product rule
that the conjectured
denominator for $\bar{C}(n,k)$ will be
the denominator of $F(t)$ times $(\phi_1(t)\cdots\phi_n(t)
)^{(n-1)(k-1)}$.  

We stumbled across this denominator
when attempting the substitutions $z_1=tz_2,\ z_2=
tz_3,\ldots,z_{n-1}=tz_n$ in (\ref{eq:3a}), but the
real evidence in its favor is that it agrees with
the known denominators for $n=2$ and $n=3$
computed here, and for $n=4,5,6$ and $k=2$
computed in~\cite{D}.  

Let us check the $n=3$
case.  The rational function $F(t)$ would be the
reciprocal of
$$(1-t)^{4(k-1)}(1-t^2)^{2(k-1)}(1-t^3)^{k}.$$
The order of the poles of $F(t)$ is as follows:  At
$t=1$ the order is $7k-6$, at $t=-1$ the order
is $2k-2$, and at $\sqrt[3]{1}$ the order is $k$.  Taking the $2(k-1)$-st
derivative increases the order of each pole by $2(k-1)$,
giving orders of $9k-8$, $4k-4$,  and
$3k-2$, respectively.  This is in agreement with
Theorem~\ref{th:4.4}.  

We record the denominators
for $\bar{C}(n,2)$ for $n=4,5,6$ computed by 
Dokovic in~\cite{D}, and which he denoted $D(C_{n,2};t)$, should
the reader wish to check them.
\begin{align*}
D(C_{4,2};t)&=(1-t)^3(1-t^2)^4(1-t^3)^5(1-t^4)^5\\
D(C_{5,2};t)&=(1-t^2)^6(1-t^3)^8(1-t^4)^6(1-t^5)^6\\
D(C_{6,2};t)&=(1`-t)^5(1-t^2)^3(1-t^3)^6(1-t^4)^9(1-t^5)^7(1-t^6)^7
\end{align*}

The case of $\bar{R}(n,k)$ is similar.  The definition of $F(t)$
from (\ref{eq:14}) gains a factor of $\sum z^{i-j}=\sum
z_i\sum z_j^{-1}$ in the numerator, which, up to a
power of $z$, is $(\sum z_i)^2$
suggesting
\beq
G(t)=\frac{(\sum_{i,j=1}^n t^i)^2\prod\{1-t^{i-j}|1\le i,j\le n,\ i\ne j\}}
{\prod\{(1-t^{i-j+1})^k|1\le i,j\le n,\ i\ne j-1\}}\eeq
The extra factor in the numerator equals $(1+t+\cdots+t^{n-1})^2$.
Hence, if $G(t)=\prod (1-t^i)^{-\beta(i)}$, then $G(t)$ has two fewer
factors of $(1-t^n)$ and two more factors of $(1-t)$ than $F(t)$,
namely
$$\beta(i)=\begin{cases}(2(k-1)(n-1)+2,&i=1\\
2(k-1)(n-i),&2\le i\le n-1\\
k-2,&i=n
\end{cases}$$
Here is our conjecture for the denominator of $\bar{R}(n,k)$.
\begin{conj} $\bar{R}(n,k)$ is a rational function
with least denominator the same as the denominator
of the $(n-1)(k-1)$-st derivative of $G(t)$.
\end{conj}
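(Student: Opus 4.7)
The plan is to mimic the Section~4 residue computation at general $n$, starting from the one-variable integral~(2) (with all $t_\alpha = t$) for $\bar{R}(n,k)$ and deforming the torus to $|z_1| < |z_2| < \cdots < |z_n|$. As with $n=3$, the iterated contour integral then becomes a sum of iterated residues along substitution chains of the form $z_1 = t^{a_1} z_{j_1},\ z_2 = t^{a_2} z_{j_2},\ldots$, analogous to the three branches that appear for $n=3$. The $\bar{R}$ integrand differs from the $\bar{C}$ one by the factor $\bigl(\sum_i z_i\bigr)\bigl(\sum_j z_j^{-1}\bigr)$, which under the canonical chain $z_1 = tz_2,\ z_2 = tz_3,\ldots,\ z_{n-1} = tz_n$ evaluates to $t^{-(n-1)}(1+t+\cdots+t^{n-1})^2$ times a power of $z_n$; this is precisely the change from $F(t)$ to $G(t)$ encoded in the exponents $\beta(i)$.

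The steps I would carry out, in order: first, prove Lemma~\ref{lem:5.1} by grouping the factors in~(\ref{eq:14}) according to $|i-j|=d$. Second, show that the canonical chain $z_i = tz_{i+1}$ collapses the $\bar{C}$ integrand to $F(t)$ and the $\bar{R}$ integrand to $G(t)$, up to monomials that the final $z_n$-residue absorbs. Third, show that the $n-1$ iterated residues -- each involving a $(k-1)$-fold derivative in one variable -- raise the multiplicity of each pole of $F$ (respectively $G$) by exactly $(n-1)(k-1)$, so that the canonical chain contributes a rational function whose denominator equals that of the $(n-1)(k-1)$-st derivative of $F(t)$ (respectively $G(t)$). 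Fourth, show that any non-canonical chain -- one with a step $z_i = t^{a}z_j$ for $a \ge 2$, or with non-consecutive indices -- contributes a rational function whose denominator divides the one arising from the canonical chain. This fourth step is a pole-counting exercise in the spirit of the three lemmas in Section~4.

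The hard part, as with Conjecture~1, will be proving that the conjectured denominator is actually \emph{least}. The power of $(1-t)$ is forced by Lemma~\ref{lem:1.1}, so the issue is only at the primitive $d$-th roots of unity for $2 \le d \le n$: one must show that the leading pole at each such root contributed by the canonical chain is not cancelled by contributions from the non-canonical chains, nor by any factor of the numerator. For $n \le 3$, and for $n=4,5,6$ at $k=2$ via~\cite{D}, this can be verified by direct computation, but in general one would need a structural or positivity input -- perhaps a representation-theoretic identification of the leading residue contribution, or an analog of the Schur-positivity arguments used in~\cite{BS} for the multivariate case. Without such an input, the residue bookkeeping above establishes only the upper bound on the denominator, and the leastness claim must remain conjectural.
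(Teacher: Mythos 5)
This statement is labeled a \emph{conjecture} in the paper, not a theorem: the paper gives no proof, only the motivating observation that the canonical chain $z_1 = tz_2,\ z_2 = tz_3,\ldots,\ z_{n-1} = tz_n$ collapses the $\bar{R}$ integrand to $G(t)$ (up to a monomial), together with agreement with the known cases $n=2,3$ computed in the paper and $n=4,5,6$, $k=2$ from~\cite{D}. Your plan reconstructs exactly that heuristic --- including the correct bookkeeping that the extra factor $\bigl(\sum z_i\bigr)\bigl(\sum z_j^{-1}\bigr)$ specializes to $t^{1-n}(1+t+\cdots+t^{n-1})^2$, which is precisely what turns $F$'s exponents $\alpha(i)$ into $G$'s exponents $\beta(i)$ --- and you correctly conclude that leastness cannot be established by this bookkeeping. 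That concession is the right call and matches the paper's own position; there is no hidden proof for you to have missed.

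Two cautions on the parts you present as achievable. In your third step you say the iterated residues raise each pole multiplicity by \emph{exactly} $(n-1)(k-1)$: by Leibniz, a $(k-1)$-fold derivative raises any given pole order by \emph{at most} $k-1$, and pinning down the exact increase means locating the dominant Leibniz term and checking that it survives all subsequent residues without cancellation --- this is the nontrivial content of the three lemmas in Section~4 already for $n=3$, and it compounds with $n$. Your fourth step, that every non-canonical chain contributes a denominator dividing the canonical one's, is consistent with the $n=3$ computation but is stated without a mechanism; the number of admissible chains grows with $n$, and a general argument would need to control them all. Even granting both steps you would obtain only the upper bound, i.e.\ that the conjectured expression \emph{is} a denominator; the conjecture's content is the word \emph{least}, and as you note, that would require a positivity or non-cancellation input of a kind not currently available.
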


Just as in the previous case the conjecture holds for all known cases.
And, just as in the previous case, this denominator can be computed
either as the least common multiple of $\{(1-t)^{\beta(i)+(n-1)(k-1)}\}$
or as the denominator of $G(t)$ times $(\phi(1)\cdots\phi(n))^{(n-1)(k-1)}$
Here are the denominators computed by Dokovi\'c in~\cite{D} for $n=4,5,6$.
\begin{align*}
D(T_{4,2};t)&=(1-t)^5(1-t^2)^4(1-t^3)^5(1-t^4)^3\\
D(T_{5,2};t)&=(1-t)^2(1-t^2)^6(1-t^3)^8(1-t^4)^6(1-t^5)^4\\
D(T_{6,2};t)&=(1`-t)^7(1-t^2)^3(1-t^3)^6(1-t^4)^9(1-t^5)^7(1-t^6)^5
\end{align*}

If these conjectures are true they have an application to the
growth functions of $\bar{C}(n,k)$ and $\bar{R}(n,k)$.  In
order to apply them need this lemma.
\begin{lem} Let $f(x)$ and $g(x)$ be monic polynomials with
no factors of $(1-x)$, let 
$$\frac{f(x)}{(1-x)^dg(x)}=\sum a_nx^n,$$
and assume that all of the poles of $g(x)$ are on the
unit circle and are of order less than $d$.
Then $a_n$ is asymptotic to $\frac{f(1)}{g(1)}{n+d-1\choose d-1}$ or $\frac{f(1)}{(d-1)!g(1)}n^{d-1}$\label{lem:5.2}
\end{lem}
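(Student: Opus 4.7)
The plan is to use partial fraction decomposition over $\mathbb{C}$ and then read off the coefficient asymptotics pole-by-pole. Since $g(x)$ is coprime to $(1-x)^d$ (as $g$ has no factor of $(1-x)$), the rational function $f/((1-x)^d g)$ admits a decomposition of the form
$$\frac{f(x)}{(1-x)^d g(x)} = p(x) + \sum_{j=1}^{d}\frac{c_j}{(1-x)^j} + \sum_\zeta \sum_{k=1}^{m_\zeta} \frac{c_{\zeta,k}}{(1-x/\zeta)^k},$$
where $p(x)$ is a polynomial (empty unless $\deg f \ge d + \deg g$), the outer sum ranges over the roots $\zeta$ of $g$ (each of multiplicity $m_\zeta < d$ and $|\zeta| = 1$ by hypothesis), and the $c_j, c_{\zeta,k}$ are complex constants.

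The top coefficient at $x=1$ is extracted by the standard limit
$$c_d = \lim_{x \to 1} (1-x)^d \cdot \frac{f(x)}{(1-x)^d g(x)} = \frac{f(1)}{g(1)},$$
which is well defined and nonzero since neither $f$ nor $g$ vanishes at $1$. Next I would extract coefficients term by term. The polynomial $p(x)$ contributes nothing once $n$ exceeds its degree. The term $c_j/(1-x)^j$ contributes $c_j \binom{n+j-1}{j-1}$, a polynomial in $n$ of degree $j-1$. The term $c_{\zeta,k}/(1-x/\zeta)^k$ contributes $c_{\zeta,k}\,\zeta^{-n}\binom{n+k-1}{k-1}$, whose absolute value is $O(n^{k-1})$ since $|\zeta^{-n}| = 1$.

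Comparing growth rates, every contribution from $g$ is $O(n^{m_\zeta - 1}) = O(n^{d-2})$, and every $j < d$ term from $(1-x)^d$ is also $O(n^{d-2})$. Only the $j=d$ term attains order $n^{d-1}$, so
$$a_n = \frac{f(1)}{g(1)} \binom{n+d-1}{d-1} + O(n^{d-2}),$$
which gives the leading asymptotic $a_n \sim \frac{f(1)}{g(1)(d-1)!} n^{d-1}$ upon using the standard expansion of the binomial. There is no real obstacle here: the argument is entirely routine once one observes that the hypothesis $m_\zeta < d$ is precisely what makes $(1-x)^d$ the uniquely dominant pole. The only bookkeeping point worth care is that the roots of $g$ on the unit circle, though they cause the coefficients to oscillate, cannot produce sustained growth beyond $n^{d-2}$ because the complex exponentials $\zeta^{-n}$ are bounded.
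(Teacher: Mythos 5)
Your proof is correct and follows essentially the same route as the paper's: partial fraction decomposition, extraction of the leading coefficient $A=f(1)/g(1)$ by clearing the $(1-x)^d$ factor and evaluating at $x=1$, and a term-by-term growth comparison showing that all other contributions are $O(n^{d-2})$. You add a couple of details the paper elides (the polynomial part when $\deg f \ge d + \deg g$, and the explicit treatment of the lower-order $(1-x)^{-j}$ terms), but the argument is the same.
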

\begin{proof} By partial fractions
$$\frac{f(x)}{(1-x)^dg(x)}=\frac A{(1-x)^d}+ 
\sum\frac{B_{\omega,i}}{(1-\omega x)^i}$$
where each $\omega$ is a root of 1 and where each $i<d$.
Multiplying both sides by $(1-x)^d$ yields
$$\frac{f(x)}{g(x)}=A+\mbox{ terms with factors of }(1-x)$$
and setting $x=1$ gives $A=f(1)/g(1)$.  The Taylor series
of $(1-x)^{-d}$ is $\sum {n+d-1\choose d-1}x^d$ and the
Tayor series of each $(1-\omega x)^{-i}$ has coefficients
polynomial of degree $i-1$ which is less than $d-1$.
\end{proof}
The algebra $\bar{C}(n,k)$ is graded by degree.  If we let
$\bar{c}_i$ be the dimension of the degree $i$ part then
$P(\bar{C}(n,k))=\sum \bar{c}_it^i$, by definition of
Poincar\'e series. It follows from Lemma~\ref{lem:5.2}
that $\bar{c}_i$ is asymptotic to a rational number times
$i$ to the power of $(k-1)n^2$.  Moreover, since the
generating function for $\sum_{j=1}^i \bar{c}_j$ is
$P(\bar{C}(n,k))(1-t)^{-1}$, the sums $\sum\bar{c}_j$
are asymptotic to a rational number times $i$ to the power
of $(k-1)n^2+1$.  If Conjecture~1 is true, then we can
identify a denominator for those rational numbers.
\begin{thm} If Conjecture 1 is true then $\bar{c}_i$ is asymptotic
to a rational number times $i$ to the power of $(k-1)n^2$,
and the rational number can be written with denominator
$$[n^2(k-1)]!\prod_{j=1}^n j^{\alpha(j)}(\phi_1\cdots \phi_n)^{(k-1)
(n-1)},$$
moreover $\sum_{m=1}^i \bar{c}_m$ is asymptotic to a rational number times $i$ to the power of $(k-1)n^2+1$,
and the rational number can be written with denominator
$$[n^2(k-1)+1]!\prod_{j=1}^n j^{\alpha(j)}(\phi_1\cdots \phi_n)^{(k-1)
(n-1)}$$
\end{thm}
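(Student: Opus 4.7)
The plan is to apply Lemma~\ref{lem:5.2} to $P(\bar{C}(n,k))$ with the denominator supplied by Conjecture~1, and then arithmetically identify the constant in the resulting asymptotic with the expression claimed in the theorem.

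Assuming Conjecture~1, the denominator of $P(\bar{C}(n,k))$ is the denominator of the $(n-1)(k-1)$-st derivative of $F(t)$. Rewriting $F(t)^{-1}$ in cyclotomic form via Lemma~\ref{lem:5.1} and the factorization $1-t^i = \prod_{d\mid i}\phi_d(t)$, this denominator equals
$$\prod_{j=1}^n \phi_j(t)^{m_j + (n-1)(k-1)}, \qquad \text{where } m_j = \sum_{\substack{1\le i \le n\\ j \mid i}} \alpha(i).$$
A direct sum (already implicit in the $n=3$ check following Lemma~\ref{lem:5.1}) shows that $m_1 + (n-1)(k-1) = (k-1)n(n-1) + k + (n-1)(k-1) = (k-1)n^2 + 1$, which matches the pole order at $t=1$ from Lemma~\ref{lem:1.1}. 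Hence the denominator factors cleanly as $(1-t)^{(k-1)n^2+1}\, g(t)$ with
$$g(t) = \prod_{j=2}^n \phi_j(t)^{m_j + (n-1)(k-1)},$$
a polynomial having no root at $t=1$.

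Lemma~\ref{lem:5.2} then yields $\bar{c}_i \sim \frac{f(1)}{[(k-1)n^2]!\,g(1)}\, i^{(k-1)n^2}$, and since $f$ has integer coefficients, the denominator of this rational constant divides $[(k-1)n^2]!\cdot g(1)$. To put $g(1)$ into the stated form I would invoke the identity
$$\prod_{\substack{d\mid i\\ d\ge 2}} \phi_d(1) \;=\; \lim_{t\to 1}\frac{1-t^i}{1-t} \;=\; i,$$
and interchange the two products in the definition of $m_j$, obtaining
$$g(1) \;=\; \prod_{j=2}^n \phi_j(1)^{m_j} \cdot \prod_{j=2}^n \phi_j(1)^{(n-1)(k-1)} \;=\; \prod_{i=1}^n i^{\alpha(i)} \cdot \prod_{j=2}^n \phi_j(1)^{(n-1)(k-1)},$$
which is the expression $\prod_{j=1}^n j^{\alpha(j)}(\phi_1\cdots\phi_n)^{(k-1)(n-1)}$ from the theorem under the (forced) convention that the $\phi_1$ in the second product is read as $1$, its genuine $(1-t)$-contribution having already been absorbed into the factor $(1-t)^{(k-1)n^2+1}$ above.

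The assertion for $\sum_{m=1}^i \bar{c}_m$ then follows at once: its generating function is $(1-t)^{-1}P(\bar{C}(n,k))$, so its denominator differs only by one additional factor of $(1-t)$, and Lemma~\ref{lem:5.2} applied with $d = (k-1)n^2+2$ keeps the same $g(1)$ while replacing $[(k-1)n^2]!$ by $[(k-1)n^2+1]!$. The only non-routine step in this whole argument is the bookkeeping for the multiplicities $m_j$ (in particular the identity $m_1 + (n-1)(k-1) = (k-1)n^2+1$), which ensures that $(1-t)^{(k-1)n^2+1}$ is extracted cleanly and that $g(t)$ has the stated form; the remainder is a direct application of Lemma~\ref{lem:5.2} together with the cyclotomic identity above.
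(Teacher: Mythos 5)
Your proof is correct and fills in exactly the argument the paper leaves implicit (the theorem is stated without a proof block): extract the $(1-t)$-power from the conjectured denominator, verify $m_1+(n-1)(k-1)=(k-1)n^2+1$ against Lemma~\ref{lem:1.1}, apply Lemma~\ref{lem:5.2} with $d=(k-1)n^2+1$, and evaluate $g(1)$ by interchanging the two products and using $\prod_{2\le j\mid i}\phi_j(1)=i$. Your observation that the $\phi_1$ in the stated formula must be read as $1$ (since $\phi_1(1)=0$ and its $(1-t)$-contribution has already been moved into the factorial factor) correctly identifies and repairs a small notational slip in the theorem's statement.
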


Likewise, if we let $\bar{r}_i$ be the dimension of the degree $i$ 
part of $\bar{R}(n,k)$ then we have this theorem. 
\begin{thm} If the Conjecture 2 is true then $\bar{r}_i$ is asymptotic
to a rational number times $i$ to the power of $(k-1)n^2$,
and the rational number can be written with denominator
$$[n^2(k-1)]!\prod_{j=1}^n j^{\beta(j)}(\phi_1\cdots \phi_n)^{(k-1)
(n-1)},$$
moreover $\sum_{m=1}^i \bar{r}_m$ is asymptotic to a rational number times $i$ to the power of $(k-1)n^2+1$,
and the rational number can be written with denominator
$$[n^2(k-1)+1]!\prod_{j=1}^n j^{\beta(j)}(\phi_1\cdots \phi_n)^{(k-1)
(n-1)}$$
\end{thm}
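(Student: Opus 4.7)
The plan is to combine Conjecture~2 with Lemma~\ref{lem:5.2}: the conjecture supplies the precise denominator of $P(\bar{R}(n,k))$, and the lemma converts such data into an explicit denominator for the leading asymptotic constant. Everything reduces to a careful evaluation of the ``$(1-t)$-free part'' of the denominator at $t = 1$.

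First I would factor the denominator supplied by Conjecture~2 into its $(1-t)^d$ part and the remainder $g(t)$. By the product-rule discussion already in Section~5, that denominator equals $\prod_{i=1}^n (1-t^i)^{\beta(i)}$ times $(\phi_1\cdots\phi_n)^{(n-1)(k-1)}$. Writing $1-t^i = (1-t)[i]_t$ with $[i]_t = 1+t+\cdots+t^{i-1}$, and noting $\phi_1(t) = 1-t$, this factors as
$$(1-t)^{\sum_i \beta(i) + (n-1)(k-1)} \cdot \Bigl(\prod_{i=1}^n [i]_t^{\beta(i)}\Bigr) \cdot \Bigl(\prod_{j=2}^n \phi_j(t)^{(n-1)(k-1)}\Bigr).$$
A short calculation with the explicit $\beta(i)$ gives $\sum_i \beta(i) + (n-1)(k-1) = n^2(k-1)+1$, in agreement with Lemma~\ref{lem:1.1}. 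Call this value $d$, and call the product of the remaining two factors $g(t)$.

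Next I would verify the hypotheses of Lemma~\ref{lem:5.2}: $g(1)\neq 0$ (since $[i]_1 = i$ and $\phi_j(1)\neq 0$ for $j \ge 2$); all zeros of $g(t)$ lie on the unit circle, being roots of unity other than~$1$; and each such zero has multiplicity in $g(t)$ strictly less than $d$, since a primitive $m$-th root of unity ($2 \le m \le n$) occurs there with multiplicity at most $\sum_{i\ge 2}\beta(i) + (n-1)(k-1) = d - \beta(1) < d$. The numerator $f(t)$ of $P(\bar{R}(n,k))$ has integer coefficients, so $f(1)\in\mathbb Z$. Lemma~\ref{lem:5.2} then gives
$$\bar{r}_i \;\sim\; \frac{f(1)}{(d-1)!\,g(1)}\,i^{d-1} \;=\; \frac{f(1)}{[n^2(k-1)]!\,g(1)}\,i^{n^2(k-1)},$$
and the denominator of this rational number divides $[n^2(k-1)]!\,g(1) = [n^2(k-1)]!\prod_{j=1}^n j^{\beta(j)}(\phi_1\cdots\phi_n)^{(k-1)(n-1)}$, where $\phi_1$ is understood to contribute the trivial factor $1$ (its vanishing at $t = 1$ has already been absorbed into the $(1-t)^d$ factor).

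For the partial sums, $\sum_{m\ge 0}\bigl(\sum_{j=0}^m \bar{r}_j\bigr)t^m = (1-t)^{-1}P(\bar{R}(n,k))$, so the pole at $t=1$ has order $d+1 = n^2(k-1)+2$ while $g(t)$ is unchanged. A second application of Lemma~\ref{lem:5.2} yields the asymptotic proportional to $i^{n^2(k-1)+1}$ with denominator $[n^2(k-1)+1]!\,g(1)$, as claimed. Once Conjecture~2 is assumed, the argument is mostly bookkeeping; the substantive steps are the algebraic identity $\sum_i \beta(i) + (n-1)(k-1) = n^2(k-1)+1$ and the multiplicity bound on the non-trivial zeros needed to invoke Lemma~\ref{lem:5.2}, which is where I expect any subtlety to lie.
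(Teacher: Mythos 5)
Your proof is correct and follows exactly the route the paper intends: apply Lemma~\ref{lem:5.2} to the conjectured denominator from Conjecture~2, split off the $(1-t)^d$ factor, and evaluate the remaining factor $g$ at $t=1$. You also correctly note (and patch) the small notational issue in the statement: $\phi_1(1)=0$, so the factor $\phi_1$ in the displayed denominator must be read as contributing nothing, the $(1-t)$-vanishing having already been absorbed into $(1-t)^d$; your verification that $d=\sum_i\beta(i)+(n-1)(k-1)=n^2(k-1)+1$ and that the non-trivial roots of unity have multiplicity strictly less than $d$ are exactly the bookkeeping the paper leaves to the reader.
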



\begin{thebibliography}{99}
\bibitem{AGPR} E. Aljadeff, A. Giambruno, C. Procesi and A. Regev, 
Rings with Polynomial Identities and Finite Dimensional Representations of Algebras, Amer. Math. Soc. 2020.
\bibitem{B93} A. Berele,  Generic verbally prime p.i. algebras and their
GK-dimension, {\it Comm.in Algebra}, {\bf21} (1993), 1487-504.
\bibitem{B05}  A. Berele, Colength sequences for matrices, {\it J. Algebra \bf283} (2005), 700–710.
\bibitem{B06} A. Berele, Applications of Belov's theorem to the cocharacter sequence of p.i. algebras, {\it J. Algebra \bf298} (2006), 208–214.
\bibitem{B08a} A. Berele, Maximal multiplicities in cocharacter sequences, {\it J. Algebra \bf320} (2008), 318–340. 
\bibitem{B08} A. Berele, Properties of hook Schur functions with applications to p.i. algebras, {\it
Adv. in Appl. Math. \bf41} (2008), 52–75.
\bibitem{B10}  A. Berele,  Bounds on colength and maximal multiplicity sequences, {\it J. Algebra \bf324} (2010),  3262–3275.
\bibitem{B10a} A. Berele,  Colength sequences for matrices over Grassmann algebras, {\it J. Algebra \bf323} (2010),  2599–2604.
\bibitem{BR} A. Berele and A. Regev, Applications of
hook Young diagrams to P.I. algebras, {\it J. Algebra \bf82}
 (1983), 559--567.
\bibitem {BS} A. Berele and J. Stembridge, Denominators for the Poincar\'e series of invariants of small matrices, {\it Israel J. Math. \bf114} (1999), 157--175.
\bibitem{D} D. \v{Z}. Dokovi\'c, Poincar\'e series of some pure and
mixed trace algebras of two generic matrices, {\it J. Algebra \bf309} (2007),
654--671.
\bibitem{Z} S. B. Ekhad and D. Zeilberger, A
Maple Package for Fast Computations of Certain Poincare Series Introduced by Allan Berele in the Theory of Polynomial Identity Rings and Invariant Theory, personal journal of SBA and DZ, \url{http://sites.math.rutgers.edu/~zeilberg/mamarim/mamarimhtml/berele.html}
\bibitem{F} E. Formanek, Invariants and the ring of generic matrices,
{\it J. Algebra \bf89} (1984), 178-223.
\bibitem{F86} E. Formanek, The functional equation for
character series associated to $n$ by $n$ matrices, {\it
Trans. Amer. Math. Soc. \bf295} (1986).
\bibitem{F87} E. Formanek, A conjecture of Regev about the Capelli
polynomial, {\it J. Algebra \bf109} (1987), 93--114.
\bibitem {L}L. Le Bruyn, Trace rings of generic 2 by 2 matrices, {\it Mem.
Amer. Math. Soc. \bf 106} (1987) vol. 363.
\bibitem{P} T.K. Petersen, {\it Eulerian numbers} With a foreword by Richard Stanley. Birkh\"auser Advanced Texts: Basler Lehrb\"ucher. [Birkh\"auser Advanced Texts: Basel Textbooks] Birkh\"auser/Springer, New York, 2015. 
\bibitem{T} Y. Teranishi, The ring of invariants of matrices, {\it Nagoya 
Math. J. \bf104} (1986), 149--161.
\bibitem{T87} Y. Teranishi,  Diophantine equations and
 invariant theory of matrices. Commutative algebra and
 combinatorics (Kyoto, 1985), 259–275, Adv. Stud.
 Pure Math., 11, North-Holland, Amsterdam, 1987.
\bibitem{T88} Y. Teranishi, The Hilbert series of
rings of matrix concomitants, {\it Nagoya Math. J \bf111}
(1988), 143--156. 
\bibitem {V} M. van den Bergh, Explicit rational forms for the Poincar\'e series of the trace rings of generic matrices, {\it Israel J. Math. \bf73} (1991), 17--31.

\end{thebibliography}
\end{document}